\let\c@author\relax
\tikzset{
  on each segment/.style={
    decorate,
    decoration={
      show path construction,
      moveto code={},
      lineto code={
        \path [#1]
        (\tikzinputsegmentfirst) -- (\tikzinputsegmentlast);
      },
      curveto code={
        \path [#1] (\tikzinputsegmentfirst)
        .. controls
        (\tikzinputsegmentsupporta) and (\tikzinputsegmentsupportb)
        ..
        (\tikzinputsegmentlast);
      },
      closepath code={
        \path [#1]
        (\tikzinputsegmentfirst) -- (\tikzinputsegmentlast);
      },
    },
  },
  mid arrow/.style={postaction={decorate,decoration={
        markings,
        mark=at position .5 with {\arrow[#1]{stealth}}
      }}},
}
\tikzset{
    ncbar angle/.initial=90,
    ncbar/.style={
        to path=(\tikztostart)
        -- ($(\tikztostart)!#1!\pgfkeysvalueof{/tikz/ncbar angle}:(\tikztotarget)$)
        -- ($(\tikztotarget)!($(\tikztostart)!#1!\pgfkeysvalueof{/tikz/ncbar angle}:(\tikztotarget)$)!\pgfkeysvalueof{/tikz/ncbar angle}:(\tikztostart)$)
        -- (\tikztotarget)
    },
    ncbar/.default=0.5cm,
}
\tikzset{square left brace/.style={ncbar=0.1cm}}
\tikzset{square right brace/.style={ncbar=-0.1cm}}
\tikzset{round left paren/.style={ncbar=0.5cm,out=120,in=-120}}
\tikzset{round right paren/.style={ncbar=0.5cm,out=60,in=-60}}
\newtheorem{theorem}{Theorem}[section]
\newtheorem{lemma}{Lemma}[section]
\theoremstyle{definition}
\newtheorem{problem}{Problem}
\newtheorem{example}{Example}[section]
\theoremstyle{remark}
\newtheorem{remark}{Remark}[section]
\newcommand{\diff}{\mathop{}\!\mathrm{d}}
\newcommand{\dx}{\diff{x}}
\newcommand{\Leb}{\mathscr{L}}
\newcommand{\id}{\mathop{}\mathopen{}\mathrm{id}}
\newcommand{\red}[1]{\textcolor{red}{#1}}
\newcommand{\defeq}{\vcentcolon=}
\newcommand{\dep}[1]{\mathrm{depth}(#1)}
\newcommand{\Pf}[1]{\mathcal{P}_{#1}}
\begin{document}

\begin{frontmatter}

\title{Connections between Bressan's Mixing Conjecture, the Branched Optimal Transport and Combinatorial Optimization}

\begin{aug}

\author{\fnms{Bohan} \snm{Zhou}\ead[label=e1]{bhzhou@ucsb.edu}}
\address{Department of Mathematics.\\University of California, Santa Barbara, CA 93106\\\printead{e1}}
\end{aug}

\begin{abstract}
We investigate the 1D version of the notable Bressan's mixing conjecture, and introduce various formulation in the classical optimal transport setting, the branched optimal transport setting and a combinatorial optimization. In the discrete case of the combinatorial problem, we prove the number of admissible solutions is on the Catalan number. Our investigation sheds light on the intricate relationship between mixing problem in the fluid dynamics and many other popular fields, leaving many interesting open questions in both theoretical and practical applications across disciplines.
\end{abstract}

\begin{keyword}[class=AMS]
\kwd[Primary ]{49Q22, 90B10, 90C27}
\kwd[; secondary ]{37A25}
\end{keyword}

\begin{keyword}
\kwd{Branched Optimal Transport}
\kwd{Combinatorial Optimization}
\kwd{Catalan Number}
\kwd{Optimla Transport}
\kwd{Graph Structure}
\end{keyword}


\end{frontmatter}

\section{Introduction}

Yann Brenier and his collaborators' seminal contributions \cite{brenier1989least, Benamou2000Computational} established a profound link between optimal transport theory and fluid dynamics from its inception. Nearly a decade ago, Brenier in \cite{Brenier2015Connections} interpreted the optimal transport at a discrete level, as a quadratic assignment problem, thereby establishing connections between optimal transport, hydrodynamics, and combinatorial optimization.

In this paper, our aim is to establish connections between the \textit{mixing problem}, a significant interest in fluid dynamics, and another variant of optimal transport known as \textit{branched optimal transport}. We formulate the corresponding discrete problem as yet another instance of combinatorial optimization. 

Mixing plays a crucial role in various fluid dynamics applications. \cite{Thiffeault2012review} provides a comprehensive review of different measures on mixing and various scenarios of optimal mixing. Moreover, the impact of mixing on sampling methods, as highlighted by\cite{Ishwaran2001Gibbs}, suggests its relevance to popular generative models, as discussed in monographs such as the monographs \cite{Goodfellow2016DL}.

Our investigation stems from Bressan's notable conjecture \cite{bressan2003lemma,bressan2006prize}, which remains open despite significant progress by researchers (e.g., \cite{Alberti2019Exponential}). It is interesting to point it out that the conjecture has a combinatorics motivation. In order to mix two types of fluids (see \Cref{fig:BV}), Bressan believes that the cost to mix fluids up to some scale depends on the regularity of the velocity fields, and he proved the lower bound of the transport cost in the 1D problem. 

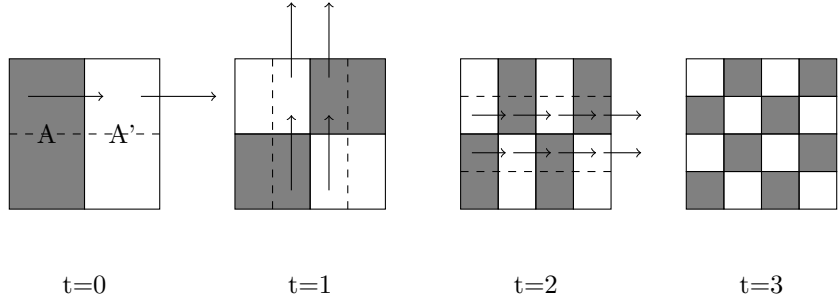
\begin{figure}[h]
\centering
\begin{tikzpicture}

\draw [fill=gray] (0,0) rectangle (1,2);
\draw [black] (1,0) rectangle (2,2);
\node at (0.5,1) {A};
\node at (1.5,1) {A'};
\draw [dashed] (0,1) -- (2,1);
\draw [->] (0.25,1.5) -- (1.25,1.5);
\draw [->] (1.75,1.5) -- (2.75,1.5);
\node at (1,-1) {t=0};

\draw [fill=gray] (3,0) rectangle (4,1);
\draw [black] (4,0) rectangle (5,1);
\draw [black] (3,1) rectangle (4,2);
\draw [fill=gray] (4,1) rectangle (5,2);
\draw [dashed] (3.5,0) -- (3.5,2);
\draw [dashed] (4.5,0) -- (4.5,2);
\draw [->] (3.75,0.25) -- (3.75,1.25);
\draw [->] (3.75,1.75) -- (3.75,2.75);
\draw [->] (4.25,0.25) -- (4.25,1.25);
\draw [->] (4.25,1.75) -- (4.25,2.75);
\node at (4,-1) {t=1};

\draw [fill=gray] (6,0) rectangle (6.5,1);
\draw [fill=gray] (6.5,1) rectangle (7,2);
\draw [fill=gray] (7,0) rectangle (7.5,1);
\draw [fill=gray] (7.5,1) rectangle (8,2);
\draw [black] (6,1) rectangle (6.5,2);
\draw [black] (6.5,0) rectangle (7,1);
\draw [black] (7,1) rectangle (7.5,2);
\draw [black] (7.5,0) rectangle (8,1);
\draw [dashed] (6,0.5) -- (8,0.5);
\draw [dashed] (6,1.5) -- (8,1.5);
\draw [->] (6.15,0.75) -- (6.6,0.75);
\draw [->] (6.15,1.25) -- (6.6,1.25);
\draw [->] (6.7,0.75) -- (7.2,0.75);
\draw [->] (6.7,1.25) -- (7.2,1.25);
\draw [->] (7.3,0.75) -- (7.8,0.75);
\draw [->] (7.3,1.25) -- (7.8,1.25);
\draw [->] (7.9,0.75) -- (8.4,0.75);
\draw [->] (7.9,1.25) -- (8.4,1.25);
\node at (7,-1) {t=2};

\draw [fill=gray] (9,0) rectangle (9.5,0.5);
\draw [fill=gray] (9.5,0.5) rectangle (10,1);
\draw [fill=gray] (10,1) rectangle (10.5,1.5);
\draw [fill=gray] (10.5,1.5) rectangle (11,2);
\draw [black] (9.5,0) rectangle (10,0.5);
\draw [black] (10,0.5) rectangle (10.5,1);
\draw [black] (10.5,1) rectangle (11,1.5);
\draw [fill=gray] (10,0) rectangle (10.5,0.5);
\draw [fill=gray] (10.5,0.5) rectangle (11,1);
\draw [black] (10.5,0) rectangle (11,0.5);
\draw [black] (9,0.5) rectangle (9.5,1);
\draw [black] (9.5,1) rectangle (10,1.5);
\draw [black] (10,1.5) rectangle (10.5,2);
\draw [fill=gray] (9,1) rectangle (9.5,1.5);
\draw [fill=gray] (9.5,1.5) rectangle (10,2);
\draw [black] (9,1.5) rectangle (9.5,2);
\node at (10,-1) {t=3};
\end{tikzpicture}
\caption{Mixing process in BV space}
\label{fig:BV}
\end{figure}

The Bressan's mixing in 1D is our starting point. In this paper, we introduce \Cref{pb: Monge} in the optimal transport formulation, \Cref{prob: BOT_formulation} in the branched optimal transport formulation and \Cref{prop: bressan1d_rooted_tree} in the combinatroics formulation. Due to the nonstandard setting of each formulation---such as the total-variation type cost function and multimarginal setting in \Cref{pb: Monge} and the addition type cost function in \Cref{prob: BOT_formulation}---directly addressing Bressan's conjecture proves challenging. Nonetheless, each new formulation presents intriguing and novel problems within its respective field. we hope this paper may draw attention from experts across various disciplines. In our new formulation, particularly in \Cref{prob: BOT_formulation} and \Cref{problem: min_rooted_tree}, the inherent graph structures may link the optimal sorting plan with the recent work on the optimal Ricci curvature on the optimal transport on graph \cite{Li2023optimal} in some sense, inviting future study.

This paper is organized as follows. In \Cref{sec: Bressan1D} we present the mathematical framework of Bressan's mixing problem in 1D. \Cref{sec: OT} then introduces its Monge's formulation. In \Cref{sec: BOT}, through a detailed example, we demonstrate that the branched optimal transport formulation offers a more natural representation. By removing dummy edges in the branched optimal transport, we finally arrive at the combinatorial formulation \Cref{sec: Comb}. At the discrete level, we prove the number of admissible solutions is the Catalan number in the main theorem \Cref{thm: P_N}. 
\section{Bressan's Mixing in 1D}\label{sec: Bressan1D}
To motivate readers the connection between the conjecture in the fluid dynamics with other approaches discussed in this paper, let's delve into the Bressan's mixing in 1D, aka the book-shifting problem.

Imagine a stack of books comprising both white and black books that need sorting, through a finite sequence of elementary operations, where the cost per elementary operation is $a+b$ when sorting a stack of black books of length $a$ and a stack of white books of length $b$. Bressan in \cite{bressan2003lemma} proved that the cost of sorting the books is at least on the order of $\abs{\log\varepsilon}$. Here $\varepsilon$ is the geometric mixing scale if the initial distribution, which will be defined later. However, it is still open that if there exists a sorting strategy that achieves the minimum cost (unless in discrete cases), given an initial function $\rho_0$, or even for arbitrary initial functions. 

\begin{figure}[ht]
     \centering
     \begin{subfigure}[b]{0.42\textwidth}
         \centering
         \includegraphics[width=\textwidth]{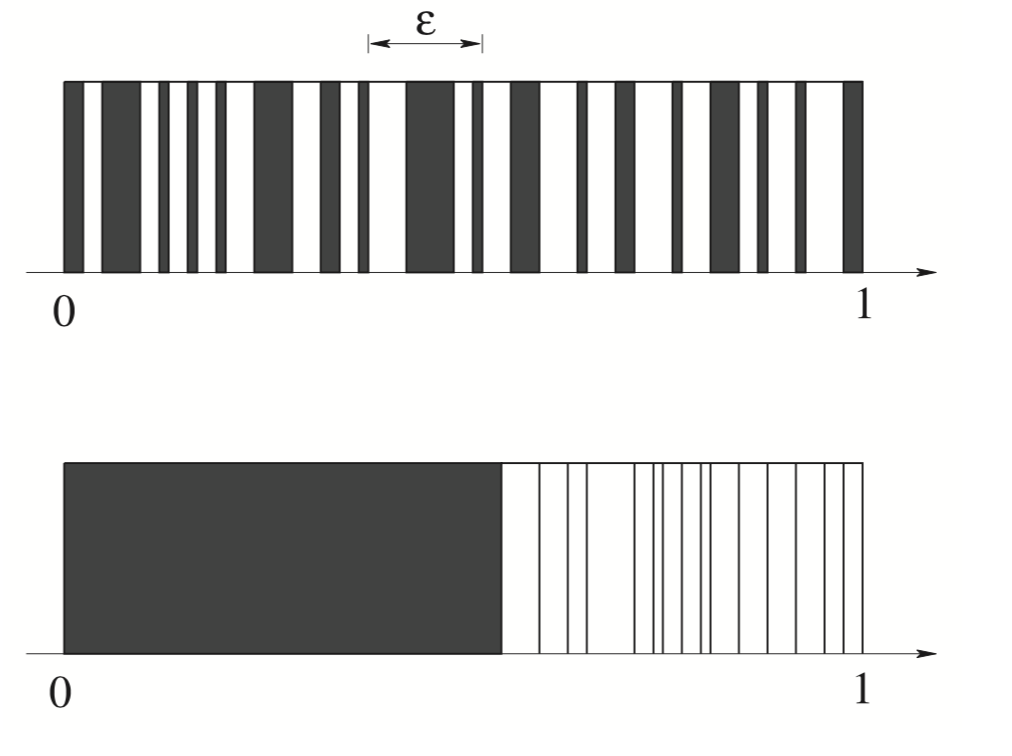}
         \caption{Book shifting}
     \end{subfigure}
     \hfill
     \begin{subfigure}[b]{0.55\textwidth}
         \centering
         \begin{tikzpicture}

\draw [->] (-0.5,4) -- (10.5,4);
\node at (0,3.7) {$0$};
\node at (10,3.7) {$1$};
\draw [-] (0,4) -- (0,6);
\draw [-] (10,4) -- (10,6);
\draw [-] (0,6) -- (10,6);
\draw [fill=gray] (2,4) rectangle (4.5,6);
\draw [fill=gray] (5.5,4) rectangle (6.5,6);
\node at (3.2,5) {$s-\sigma$};
\draw [->] (2.5,5) -- (2,5);
\draw [->] (4,5) -- (4.5,5);
\node at (6,5) {$\sigma$};
\node at (5,5) {$t$};
\draw [->] (4.8,5) -- (4.5,5);
\draw [->] (5.2,5) -- (5.5,5);
\draw [->] (5.8,5) -- (5.5,5);
\draw [->] (6.2,5) -- (6.5,5);

\pattern [pattern=checkerboard,pattern color=black!30] 
(0,4) rectangle (2,6);

\pattern [pattern=checkerboard,pattern color=black!30] 
(6.5,4) rectangle (10,6);

\node at (5,3.5) {$\rho_{k-1}$};

\draw [->] (-0.5,0) -- (10.5,0);
\node at (0,-0.3) {$0$};
\node at (10,-0.3) {$1$};
\draw [-] (0,0) -- (0,2);
\draw [-] (10,0) -- (10,2);
\draw [-] (0,2) -- (10,2);
\draw [fill=gray] (2,0) rectangle (5.5,2);
\draw [fill=white] (5.5,0) rectangle (6.5,2);
\node at (3.7,1) {$s$};

\node at (6,1) {$t$};
\draw [->] (3.5,1) -- (2,1);
\draw [->] (4,1) -- (5.5,1);
\draw [->] (5.8,1) -- (5.5,1);
\draw [->] (6.2,1) -- (6.5,1);

\pattern [pattern=checkerboard,pattern color=black!30] 
(0,0) rectangle (2,2);

\pattern [pattern=checkerboard,pattern color=black!30] 
(6.5,0) rectangle (10,2);

\node at (5,-0.5) {$\rho_{k}$};

\end{tikzpicture}
         \caption{Transform from $\rho_{k-1}$ to $\rho_{k}$\label{fig:1Dproof}}
     \end{subfigure}
     \hfill
     \caption{Bressan's mixing in 1D\label{fig:BressanLemma}}
\end{figure}

Precisely, let's consider an initial function $\rho_0:[0,1]\mapsto\{0,1\}$, where $\rho_0(x)=0$ (or $\rho_0(x)=1$) refers to the black (or white) book at position $x$. Let $m=\int_0^1 \rho_0(x)\dx$, $\mathcal{F}_m=\{\rho|\int_0^1 \rho(x)\dx=m\}$, and the terminal function $\rho_{T}$ is defined as 
    \begin{equation*}
        \rho_{T}(x)=\left\{
        \begin{aligned}
        0\qquad &\mathrm{if~} 0<x<1-m\\
        1\qquad &\mathrm{if~} 1-m<x<1.
        \end{aligned}
        \right.
    \end{equation*}

We say that $\rho_{k+1}$ is obtained from $\rho_k$ by an \textit{elementary operation} if: for some $y\in[0,1)$ and $a,b>0$, one has
\begin{equation}
\label{eq: rho_change}
    \rho_k(x)=\left\{
    \begin{aligned}
    1\qquad & \mathrm{if~} y<x<y+a,\\
    0\qquad & \mathrm{if~} y+a<x<y+a+b.
    \end{aligned}
    \right.
    \qquad
    \rho_{k+1}(x)=\left\{
    \begin{aligned}
    &0\qquad  &\mathrm{if~}& y<x<y+b,\\
    &1\qquad  &\mathrm{if~}& y+b<x<y+a+b,\\
    &\rho_k(x)\qquad  &&\mathrm{otherwise}.
    \end{aligned}
    \right.
\end{equation}
Then the cost $C(\rho_{k},\rho_{k+1})=a+b$. Note that we do not allow the reverse transposition, i.e., white books can only be transferred to the right, and black books can only be transferred to the left. 

We say that a sequence of density functions $\{\rho_0,\rho_1,\cdots,\rho_k,\cdots,\rho_n\}$ is a \textit{sorting plan} if  $\rho_k\in\mathcal{F}_m$ for any $k$, and $\rho_{k+1}$ can be obtained through an elementary operation from $\rho_{k}$, and $\rho_n=\rho_T$.

We call $\rho_0$ is $\kappa$-mixed in the \textit{geometric mixing scale} $\varepsilon$, provided with
    \begin{equation}\label{eq:geomixscale2}
    \kappa\varepsilon \leqslant \int_{y}^{y+\varepsilon} \rho_0(x)\diff{x} \leqslant (1-\kappa)\varepsilon,\qquad\forall y\in[0,1-\varepsilon]
    \end{equation}
for some constant $0<\kappa<1$ and for some scale $\varepsilon$. One may prescribe $\kappa$ first, for example $\kappa=\frac{1}{3}$, resulting in that every $\varepsilon$-interval consists of at least $\frac{1}{3}\varepsilon$ length of white (and black) books.

\begin{lemma}[\cite{bressan2003lemma}]
\label{lem: 1Dlemma}
For any sorting plan $\{\rho_0,\rho_1,\cdots,\rho_k,\cdots,\rho_n\}$, the cost of sorting satisfies:
\begin{equation}\label{eq:bressan1d}
\sum_{i=0}^{n-1}C(\rho_i,\rho_{i+1})\geqslant C(\kappa)\abs{\log\varepsilon}.
\end{equation}
\end{lemma}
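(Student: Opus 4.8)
The plan is to track a quantity that measures how ``unsorted'' the configuration is, and to show that each elementary operation can only decrease it by an amount controlled by the cost of that operation, while the mixedness hypothesis \eqref{eq:geomixscale2} forces the initial value of this quantity to be at least of order $\abs{\log\varepsilon}$. A natural candidate functional is a Lyapunov-type integral of the form $\Phi(\rho) = \int_0^1 \int_0^1 K(x,x') \, \mathbbm{1}[\rho(x) > \rho(x')] \, \mathbbm{1}[x < x'] \, \dx \D{x'}$ with a kernel $K$ that is large when $x$ and $x'$ are close, e.g.\ $K(x,x') = \tfrac{1}{|x - x'|}$ or a truncated/dyadic version of it; the indicator picks out ``inverted'' pairs (a white book to the left of a black book, which the sorting must eventually fix). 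The three ingredients I would establish in order are: (i) $\Phi(\rho_T) = 0$, since in the sorted state no inverted pair exists; (ii) a lower bound $\Phi(\rho_0) \geqslant c(\kappa)\abs{\log\varepsilon}$; and (iii) a per-step decay estimate $\Phi(\rho_k) - \Phi(\rho_{k+1}) \leqslant C' \cdot C(\rho_k,\rho_{k+1})$. Chaining these gives $\sum_i C(\rho_i,\rho_{i+1}) \geqslant (C')^{-1}\Phi(\rho_0) \geqslant C(\kappa)\abs{\log\varepsilon}$.

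\textbf{The lower bound on $\Phi(\rho_0)$.} For step (ii), I would use \eqref{eq:geomixscale2} at a geometric sequence of scales $\varepsilon_j = 2^j \varepsilon$ for $j = 0, 1, \dots, \lfloor \log_2(1/\varepsilon)\rfloor$. At each scale, partition $[0,1]$ into consecutive intervals of length $\varepsilon_j$; the mixedness condition guarantees each such interval contains at least a $\kappa$-fraction of both colors, so within each interval there is a definite ``mass of inverted pairs at distance $\lesssim \varepsilon_j$'', contributing (after weighting by $K \sim 1/\varepsilon_j$ on that scale) an amount bounded below independent of $j$. Summing the $\asymp \log_2(1/\varepsilon)$ scales yields the $\abs{\log\varepsilon}$ growth. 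Care is needed to avoid double-counting pairs across scales — this is handled by using a dyadic kernel $K(x,x') = \sum_j 2^{-j} \mathbbm{1}[|x-x'| \asymp \varepsilon_j]$ so that each pair is charged at essentially one scale, or equivalently by working with a genuinely $1/|x-x'|$ kernel and integrating.

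\textbf{The per-step estimate.} Step (iii) is where the asymmetry of the elementary operation (white moves right, black moves left, never the reverse) is essential: an elementary operation swaps a white block of length $a$ immediately left of a black block of length $b$. It can only \emph{destroy} inverted pairs — namely pairs $(x,x')$ with $x$ in the white block and $x'$ in the black block — and creates none, because no previously-sorted pair gets inverted. The number (mass) of destroyed pairs at distance $\leqslant r$ is at most $\min(a,b,r)^2/\ldots$-type quantity, and integrating the kernel $1/|x-x'|$ over the rectangle $[y,y+a]\times[y+a,y+a+b]$ gives a bound of the form $\asymp (a+b)$ up to logarithmic factors — and in fact a clean $O(a+b)$ bound if the kernel is truncated below at the current mixing scale, which is legitimate since finer structure than $\varepsilon$ is irrelevant to $\Phi(\rho_0)$'s lower bound.

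\textbf{Main obstacle.} The delicate point — and the one I would spend the most effort on — is matching the kernel in steps (ii) and (iii): the same $K$ must simultaneously be singular enough at the diagonal to accumulate $\abs{\log\varepsilon}$ over dyadic scales in (ii), yet integrable enough against the thin rectangles appearing in (iii) to give a bound linear in $a+b$ with a constant independent of the configuration. The truncated kernel $K_\varepsilon(x,x') = \min\bigl(1/|x-x'|,\,1/\varepsilon\bigr)$, or its dyadic analogue, is the natural reconciliation: truncation at scale $\varepsilon$ caps the per-step loss at $O(a+b)$ while preserving all $\asymp\log(1/\varepsilon)$ useful dyadic scales in the lower bound. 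Getting the explicit dependence $C(\kappa)$ (linear in $\kappa$, as the statement implicitly allows any such constant) then just requires bookkeeping the $\kappa$-fraction guaranteed by \eqref{eq:geomixscale2} through the scale-by-scale sum.
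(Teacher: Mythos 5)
Your proposal is correct in substance but proves the lemma by a genuinely different route from the paper. The paper follows Bressan's original recursion: it defines $V(s)$ as the minimal cost to assemble a black interval of length $\geqslant s$, shows $V(s)\geqslant V(\sigma)+V(s-\sigma)+\kappa^2 s$ by charging the white mass $t\geqslant \kappa^2(s-\sigma)$ that the final merging step must displace, and iterates $n\asymp\abs{\log\varepsilon}$ times before the $2^n\varepsilon$ error term takes over. Your argument instead exhibits a monotone Lyapunov functional and telescopes once; the $\abs{\log\varepsilon}$ comes from the $\asymp\log_2(1/\varepsilon)$ disjoint dyadic annuli of the kernel rather than from $\asymp\log_2(1/\varepsilon)$ doublings of an interval. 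What each approach buys: yours avoids having to single out ``the first $\rho_k$ containing a black interval of length $s$'' and is in the spirit of mix-norm lower bounds; the paper's recursion $V(s)\geqslant V(\sigma)+V(s-\sigma)+\kappa^2 s$ is precisely the binary-merging structure that Sections 4--5 later formalize as a branched transport graph and rooted tree, so it integrates more tightly with the rest of the paper.

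Two remarks on your sketch, neither fatal. First, the ``delicate point'' you flag is not actually delicate: the untruncated kernel $K(x,x')=\abs{x-x'}^{-1}$ already works on both sides, since the within-block inversion mass destroyed by one operation equals $\int_0^a\int_0^b (a-u+v)^{-1}\D{v}\D{u}=a\log(1+b/a)+b\log(1+a/b)\leqslant a+b$, with no logarithmic loss; truncation is only needed to keep $\Phi$ finite for rough data. Second, ``creates none'' is imprecise at the pointwise level — a white book to the left of the block does acquire new black books to its right inside the block, since the colors at fixed locations change. The claim that survives is the one you actually need: for a fixed white point outside the block, the black mass inside the block is conserved and only moves closer to it, so, $K$ being decreasing in distance, every cross-pair contribution to $\Phi$ weakly increases; hence the decrease of $\Phi$ is bounded by the within-block term alone. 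This rearrangement observation is exactly where the one-sidedness of the elementary operation (white only right, black only left) enters, as you anticipated.
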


For the sake of completeness, we attach the modified proof to \cref{lem: 1Dlemma}.

\begin{proof}
First, we find the estimate on $m$ in terms of $\kappa$ and $\varepsilon$.\par

On one hand, $m=\displaystyle\int_0^1 \rho_0(x)\diff{x}\leqslant \left(\int_0^1+\int_{1-\varepsilon}^{\varepsilon\lfloor\frac{1}{\varepsilon}\rfloor}\right)\rho_0(x)\diff{x}\leqslant (1-\kappa)\varepsilon \left(\left \lfloor \frac{1}{\varepsilon}\right \rfloor+1\right)$;\par

On the other hand, $m=\displaystyle\int_0^1 \rho_0(x)\diff{x}\geqslant \int_0^{\varepsilon \lfloor \frac{1}{\varepsilon}\rfloor} \rho_0(x)\diff{x}  \geqslant \kappa\varepsilon \left\lfloor \frac{1}{\varepsilon}\right\rfloor$.\par

For $s\in [\varepsilon, 1-\kappa\varepsilon\left\lfloor \frac{1}{\varepsilon}\right\rfloor]$, let $V(s)$ denote the minimum cost to get some interval of black books ($\rho(x)=0$), the length of which is greater or equal to $s$ transformed from $\rho_0$. Apparently, $V(s)$ is an increasing function.\par

First, we note that for any sorting plan $\left\{\rho_0,\rho_1,\cdots,\rho_k,\cdots,\rho_n\right\}$, we have

\begin{equation}\label{eq: 1dproof}
\sum_{i=0}^{n-1} C(\rho_i,\rho_{i+1})\geqslant V(1-m) \geqslant V(\kappa/2).
\end{equation}

Second, for $s\in [\varepsilon, 1-\kappa\varepsilon\left\lfloor \frac{1}{\varepsilon}\right\rfloor]$, assume that there is a sorting plan $\left\{\rho_0,\rho_1,\cdots,\rho_k\right\}$ that achieves the optimality of $V(s)$, and $\rho_k$ is the first density function, which has an interval of black books $\geqslant s$. Then, for $\rho_{k-1}$ there must be intervals of black books of length $\sigma$ and $s-\sigma$ for some $0<\sigma<s$, to formulate the interval of black books of length $s$. By definition,\par

\begin{equation}\label{eq: vs1}
    V(s)=\sum_{i=0}^{k-1} C(\rho_i,\rho_{i+1})\geqslant \sum_{i=0}^{k-2} C(\rho_i,\rho_{i+1})\geqslant V(\sigma) + V(s-\sigma).
\end{equation}

Note that $V(\varepsilon)\geqslant 0$, and by mathematical induction, we have

\begin{equation}\label{eq: vs2}
    V(s)\geqslant s-\varepsilon.
\end{equation}

Third, we do a more careful estimate for \eqref{eq: vs1}. We assume $s-\sigma\geqslant\varepsilon$ and the interval of length $s-\sigma$ is on the left of the interval of length $\sigma$.

This time we include the estimate for $C(\rho_{k-1},\rho_k)$. The key is to estimate the length $t$ of white books. Note that the white books, between the black books of length $s-\sigma$ and $\sigma$, cannot come from the right of the black books of length $\sigma$. And it has to include all white books inside the interval of length $s-\sigma$, where $\rho_0$ has.\par

\begin{equation}\label{eq: t}
t\geqslant \kappa\left((s-\sigma)-(1-\kappa)\varepsilon\right)\geqslant \kappa^2(s-\sigma).
\end{equation}

$$t\geqslant \kappa\left(\varepsilon \left\lfloor \frac{s-\sigma}{\varepsilon}\right\rfloor\right)$$

Hence $C(\rho_{k},\rho_{k+1})=t+\sigma \geqslant \kappa^2(s-\sigma)+\sigma\geqslant \kappa^2 s$.\par

Combine \eqref{eq: vs1}, \eqref{eq: vs2} and \eqref{eq: t}, we have
$$
\begin{aligned}
V(S)&\geqslant V(\sigma)+V(s-\sigma)+\kappa^2 s\\
&\geqslant (\sigma-\varepsilon) + (s-\sigma-\varepsilon) + \kappa^2 s\\
&\geqslant (1+\kappa^2) s -2\varepsilon,
\end{aligned}$$
By mathematical induction,
\begin{equation}\label{eq: vsn}
\begin{aligned}
    V(S)&\geqslant V(\sigma) + V(s-\sigma) + \kappa^2 s\\
    &\geqslant (1+\kappa^2) \sigma -2\varepsilon + (1+\kappa^2 )(s-\sigma)-2\varepsilon + \kappa^2 s\\
    &\geqslant (1+2\kappa^2)s - 2\cdot 2\varepsilon\\
    &\geqslant (1+n\kappa^2)s- 2^n \varepsilon.
\end{aligned}
\end{equation}
Let $g(n)=(1+n\kappa^2)s-2^n \varepsilon$, we want to find the maximum of $g(n)$ to be the lower bound of $V(S)$. By a basic calculus, its maximum achieves at $n= C(\kappa)\abs{\log\varepsilon}.$\par

Combine \eqref{eq: 1dproof} and \eqref{eq: vsn}, we have
$$\sum_{i=0}^{i=n-1}C(\rho_i,\rho_{i+1})\geqslant C(\kappa)\abs{\log\varepsilon}.$$

\end{proof}

\section{Optimal Transport Formulation}\label{sec: OT}

Following the ideas in \cite{Bianchini2006Bressan}, we are able to obtain the Monge's formulation under the special cost $c(x,y)=1_{x\neq T(x)}$ in the optimal transport, if we align a natural flow map for the elementary operation defined in \eqref{eq: rho_change}: 

\begin{equation}
\label{eq: ele_map}
T_{k+1}(x)=\left\{
\begin{aligned}
&x+b\qquad \mathrm{if~}& y<x<y+a \mathrm{~and~}& \rho_k(x)=1\\
&x-a\qquad \mathrm{if~}& y+a<x<y+a+b \mathrm{~and~}& \rho_k(x)=0\\
&x\qquad &\mathrm{otherwise}&
\end{aligned}
\right.
\end{equation}

Thus, the cost of elementary transposition can be re-calculated by:\par
\begin{equation}
    \label{eq: Bressan_cost}
    C(\rho_k,\rho_{k+1})=\int_0^1 1_{x\neq T_{k+1}(x)}\diff{x}=a+b.
\end{equation}

Let $\mathcal{T}_e$ denote the family of elementary transpositions in the form of \eqref{eq: ele_map}.

\begin{problem}[Monge's optimal transport]
\label{pb: Monge}
Under the assumption of \cref{lem: 1Dlemma}, given two Radon measures $\mu=\rho_0\Leb$ and $\nu=\rho_T\Leb$ in $\mathcal{M}_{m}([0,1])$. Let $T_0=\id$.

For some integer $k$, let $T^{(k)}=T_k\circ T_{k-1}\circ\cdots\circ T_1\circ T_0$ induce a sorting plan, where each $T_i \in \mathcal{T}_e$.

The Monge's formulation is given by
\begin{equation}
    \label{eq: Monge}
    \inf_k\inf_{T^{(k)}}\left\{M(T^{(k)})=\sum_{i=0}^{k-1} \int_0^1 1_{T^{(i)}(x)\neq T^{(i+1)}(x)}\diff{x}.\right\}
\end{equation}
\end{problem}

Suppose $T^{(*)}$ is the minimizer to \eqref{eq: Monge}, the opposite part to \cref{lem: 1Dlemma} is to check
$$M(T^{(*)})=C(\kappa)\abs{\log\varepsilon}.$$
\begin{remark}
If one fixes $k=1$, then it corresponds to the Strassen's theorem in \cite{Villani2003OT} and the total cost is just $\frac{1}{2}\norm{\mu-\nu}_{TV}$. However, \Cref{pb: Monge} is a double-minimization problem with variable marginals, and Monge's problem in the classical optimal transport is known to be restricted, it is unclear how to leverage the classical OT theory.
\end{remark}

\section{Branched Optimal Transport}\label{sec: BOT}

In this section, we present another formulation in terms of the branched optimal transport. Interested readers may refer to \cite{xia2003optimal, xia2015motivations} for the rigorous and standard mathematical formulation for the branched optimal transport. Here, let's start with a series representation of the book-shifting problem.

\subsection{Series representation of book-shifting problem}
We start with a discrete version of book-shifting problem, where we replace the length of a stack of books with the number of books.

We consider all the books are placed on the locations $X=\left\{1,2,\cdots, N\right\}$. Without loss of generality, we consider the function $\rho:X\mapsto \{-1,1\}$, where $\rho(x)=-1$ refers to the black book at the position $x$, while $\rho(x)=1$ refers to the white book at the position $x$, $N$ is the total number of books. Without loss of generality, we assume the first stack is white books and the last stack is black books.
    
For a successive stack of one-type books, add their function values together and introduce the function $\rho$, which induces the alternating series:
    $$
    \{c_i\}_{i=1}^{2k}=\left\{
    \begin{aligned}
    a_{\frac{i+1}{2}}&\quad&\mathrm{i~is~odd};\\
    -b_{\frac{i}{2}}&\quad&\mathrm{i~is~even};
    \end{aligned}\right.
    $$
    where $\{a_i\}_{i=1}^k$ and $\{b_i\}_{i=1}^k$ are two positive series, satisfying $\sum_{i=1}^k (a_i+b_i)=N$. The above can be easily generated to the case of real number. 
    
    If $a_j$ permutes with $-b_j$, that is, a stack of $a_j$ white books is permuted with a stack of $b_j$ black books, then after this permutation, the number of series decreases by 2, and the operation cost is $a_j+b_j$.
    
    \begin{figure}[ht!]
        \centering
        \begin{tikzpicture}
        \node at (0,0) {$a_1$};
        \node at (1,0) {$-b_1$};
        \node at (2.5,0) {$\cdots$};
        \node at (4,0) {$-b_{j-1}$};
        \node at (5,0) {$a_j$};
        \node at (6,0) {$-b_{j}$};
        \node at (7,0) {$a_{j+1}$};
        \node at (8.5,0) {$\cdots$};
        \node at (10,0) {$-b_k$};
        
        \draw [black] (3.5,-2.3) rectangle (7.4,0.3);
        \draw [-latex,thick] (5.5,-0.2) -- (5.5,-1.8);
        
        \node at (0,-2) {$a_1$};
        \node at (1,-2) {$-b_1$};
        \node at (2.5,-2) {$\cdots$};
        \node at (4.5,-2) {$-b_{j-1}-b_j$};
        \node at (6.5,-2) {$a_j+a_{j+1}$};
        \node at (8.5,-2) {$\cdots$};
        \node at (10,-2) {$-b_k$};
        \end{tikzpicture}
        \caption{$a_j$ permutes with $-b_j$}
        \label{fig: branched_sample1}
\end{figure}

    
    If $a_1$ permutes with $-b_1$, then after this permutation, the element $-b_1$ will not move thus will not generate any cost, the number of series decreases by 1, and its cost is $a_1+b_1$. Similar thing happens if $a_k$ permutes with $-b_k$.\par
    
    
\begin{figure}[ht!]
        \centering
        \begin{tikzpicture}
        \node at (0,0) {$a_1$};
        \node at (1,0) {$-b_1$};
        \node at (2,0) {$a_2$};
        \node at (4,0) {$\cdots$};
        \node at (5,0) {$a_j$};
        \node at (6,0) {$-b_{j}$};
        \node at (8.5,0) {$\cdots$};
        \node at (10,0) {$-b_k$};
        
        \draw [black] (-0.3,-2.3) rectangle (2.7,0.3);
        \draw [-latex,thick] (1.5,-0.2) -- (1.5,-1.8);

        \node at (0.8,-2) {$-b_1$};
        \node at (2.1,-2) {$a_1+a_2$};
        \node at (4,-2) {$\cdots$};
        \node at (5,-2) {$a_j$};
        \node at (6,-2) {$-b_{j}$};
        \node at (8.5,-2) {$\cdots$};
        \node at (10,-2) {$-b_k$};
        \end{tikzpicture}
        \caption{Permutations on the end terms}
        \label{fig: branched_sample2}
\end{figure}
    
    After at most $(2k-1)$ permutations, we finish sorting, in the form of $(-\sum_{i=1}^k b_i, \sum_{i=1}^k a_i)$.

\begin{remark}
It is worthy to note that the reverse operation (which is not allowed by Bressan) may be optimal. For example, if we have a series representation $(5,-1,1,-5)$, the optimal operation is to permute -1 and 1 first, which is a reverse operation, and we get a new series as $(6,-6)$, hence the total cost is $2+12=14$; while if we permute 5 and -1 first, then we get a new series as $(-1,6,-5)$, after one more permutation, the total cost is $6+11=17$.
\end{remark}

\subsection{Translation into branched transport model}

If we consider $\{a_i\}_{i=1}^k$ as the mass on some nodes, $\{b_i\}_{i=1}^k$ as the distance between two nodes, and we add a node $a_T$ with zero mass, as the sink. Then \Cref{fig: branched_sample1} can be interpreted as follows:\par


\begin{figure}[h]
\label{fig: branched_sample}
\centering
\begin{tikzpicture}

\draw (0,0) circle (0.4);
\node at (0,0) {$a_1$};

\draw[-] (0.4,0) -- (2,0);
\node at (1.2,0.3) {$b_1$};

\draw (2.2,0) circle (0.2);
\node at (2.2,0) {$a_2$};

\draw[-] (2.4,0) -- (3,0);
\node at (2.7,0.3) {$b_2$};

\draw[dotted] (3,0) -- (5.2,0);

\draw[-] (5.2,0) -- (5.6,0);
\node at (5.2,0.3) {$b_{j-1}$};

\draw (6.1,0) circle (0.5);
\node at (6.1,0) {$a_j$};

\draw[-] (6.6,0) -- (7.4,0);
\node at (7.0,0.3) {$b_{j}$};

\draw (7.7,0) circle (0.4);
\node at (7.7,0) {$a_{j+1}$};

\draw[-] (8.1,0) -- (8.7,0);
\node at (8.4,0.3) {$b_{j+1}$};

\draw[dotted] (8.7,0) -- (9.7,0);

\draw (10.2,0) circle (0.5);
\node at (10.2,0) {$a_{k}$};

\draw[-] (10.7,0)--(11.5,0);
\node at (11.1,0.3) {$b_{k}$};

\draw (11.6,0) circle (0.1);
\node at (11.6,0.3) {$a_{T}$};

\draw[-] (7.7,-0.4) -- (7.7,-1.2);
\path [draw=black,postaction={on each segment={mid arrow=red}}] (6.4,-0.4) -- (7.15,-1.45);

\draw (7.7,-2) circle (0.8);
\node at (7.7,-2) {$a_j+ a_{j+1}$};
\end{tikzpicture}
\caption{Branched transport graph}
\end{figure}

In each permutation, if $a_j$ and $a_k$ ($j<k$) merge, for example $a_j$ and $a_{j+1}$in the above graph, we call $a_k$ is a \textit{parent} of $a_j$, while $a_j$ is a \textit{child} of $a_k$. After two notes $a_j$ and $a_k$ merged, we use the parent's name and use their sum to be the mass of the new node, i.e., node $a_k$ in the following series with mass $a_k+a_j$. The cost of this permutation is $a_j+\sum_{i=j}^{k-1} b_i$.\par

In the following, we provide a case study to translate the series representation into the branched optimal transport problem: 
\begin{example}[Translate operations on series into a family tree]
\label{eg: eg1}\hfill\par
\leavevmode
The following illustrates a sorting plan on a series representation of Bressan 1D problem.
$\{12, -5, 7, -10, \red{5, -17,} 4, -5, 14, -5.\}$\par
\noindent$\to\{12, -5, 7, -27, 9, -5, \red{14, -5.}\}$\par
\noindent$\to\{\red{12, -5,} 7, -27, 9, -10, 14.\}$\par
\noindent$\to\{-5, \red{19, -27,} 9, -10, 14.\}$\par
\noindent$\to\{-32, \red{28, -10,} 14.\}$\par
\noindent$\to\{-42, 42.\}$\par
Total cost $= 22 + 19 + 17 + 46 + 38 = 142.$\par\vskip 0.1in

Now, we translate the above series into a branched transport graph.\par

In the first layer, we use $\{a_i\}_{i=1}^5$ in the \Cref{fig: translateBTG} to represent the positive terms in the series, $a_6$ to represent the sink; we align values on each direct edges between $a_j$ and $a_{j+1}$ by $b_j$. The size of each node except $a_6$ depends on its mass. The size of each edge depends on $b_j$.

First we permute 5 and -17, resulting in node $a_3$ to merge with node $a_4$. Following this permutation, we call that $a_4$ is $a_3$'s parent, and we still denote by $a_4$ for that merged node. In the branched transport graph, node $a_4$ goes down by one layer, and links with node $a_3$. The projected distance of the diagonal edge is 17.\par

We repeat the above step for each permutation, and obtain a branched transport graph \Cref{fig: translateBTG}

Now, let's verify that the cost computed from the family tree matches with the cost computed in the series operations.\par

$$
\begin{aligned}
\mathrm{Total~cost}&=(a_1+b_1)+(a_1+a_2+b_2+b_3)+(a_3+b_3)+(a_1+a_2+a_3+a_4+b_4+b_5)+(a_5+b_5)\\
&=3a_1+2a_2+2a_3+a_4+a_5+b_1+b_2+2b_3+b_4+2b_5\\
&=36+14+10+4+14+5+10+34+5+10\\
&=142.
\end{aligned}$$

\begin{figure}[h!]
    \centering
    \includegraphics[width=0.8\linewidth]{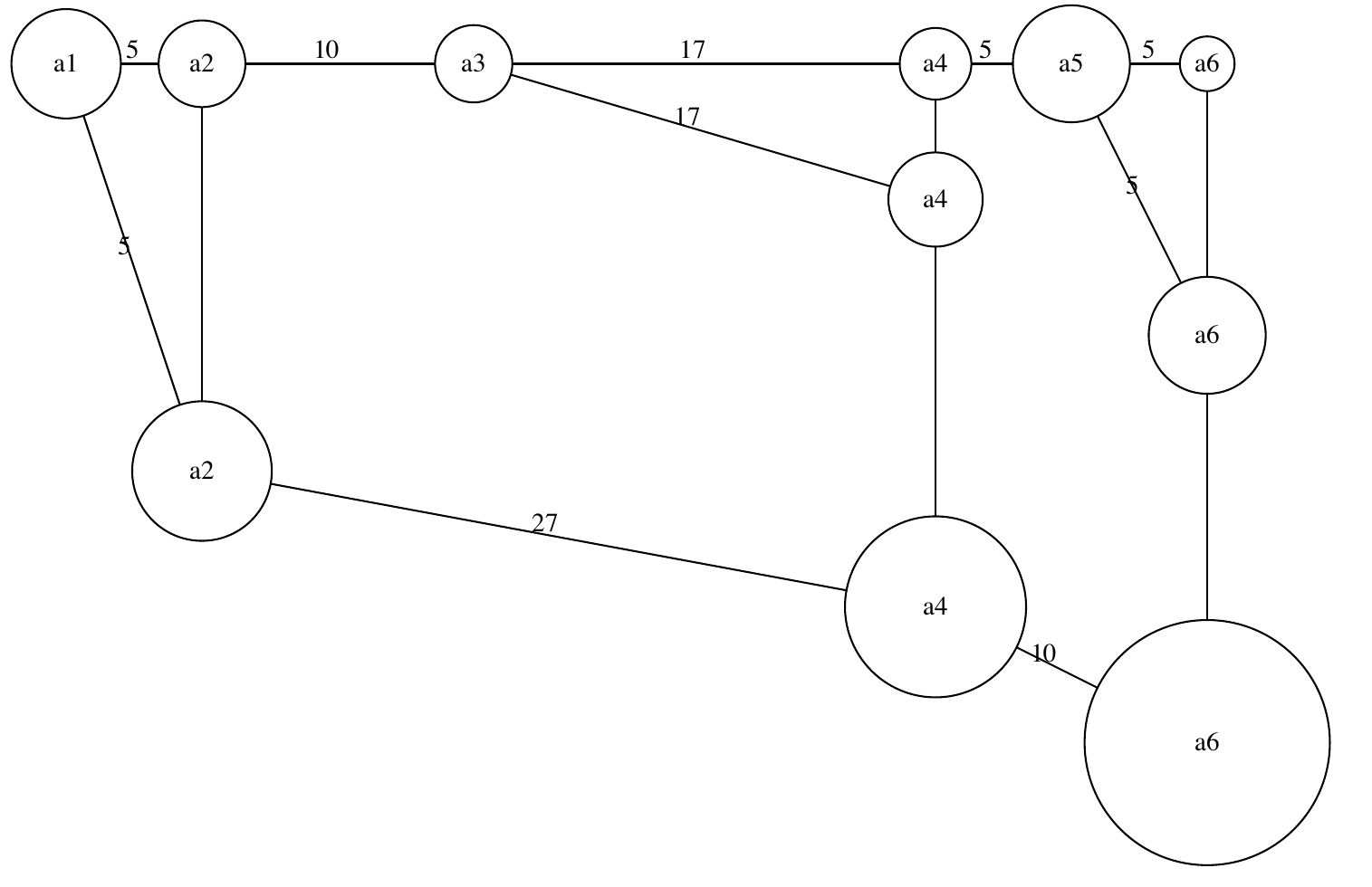}
    \caption{Branched transport graph in \Cref{eg: eg1}}
    \label{fig: translateBTG}
\end{figure}
\end{example}

\begin{problem}[Branched optimal transport]
\label{prob: BOT_formulation}
Given a space $(X,d)$, $X$ has a partially ordered ($\leqslant$) algebraic structure. The metric $d$ satisfies an additional property:
\[d(x_1,x_3)=d(x_1,x_2)+d(x_2,x_3),\quad\textrm{if}\quad x_1\leqslant x_2 \leqslant x_3.\]

Given a series of points $\{x_i\}_{i=1}^{k+1}\in X$ with a partial order, given two atomic measures $\boldsymbol{a,b}$ concentrated on $\{x_i\}_{i=1}^k\in X$ and $x_{k+1}$ correspondingly, i.e.,
\[\boldsymbol{a}=\sum_{i=1}^{k} a_i \delta_{x_i},\qquad \boldsymbol{b}=b \delta_{x_{k+1}}\]
with $b=\sum_{i=1}^{k} a_i$.\par

The measures $\boldsymbol{a,b}$ induces a piece-wise linear function $\rho(x)=\sum_{j=1}^k {\rho_j(x)}$, satisfying the mixing condition \eqref{eq:geomixscale2} for some scale $\varepsilon, \kappa$, where 
\[
\rho_j(x)=\left\{
\begin{aligned}
    &1\qquad x\in [0+\sum_1^{j-1} (a_i+d(x_i,x_{i+1})),a_j+\sum_{1}^{j-1} (a_i+d(x_i,x_{i+1}))]\\
    &0\qquad \textrm{Otherwise}.
\end{aligned}
\right.
\]

Let $E$ be the set of direct edges, $E=\{e| e^{-}=x_i, e^{+}=x_j, \mathrm{~if~} x_i\leqslant x_j\}$.\par
A \textit{transport graph} $G$ from $\boldsymbol{a}$ to $\boldsymbol{b}$, consisting of a vertex set $V(G)\subset X$, a directed edge set $E(G)$, and a weight function on edges:

$$\omega: E(G)\mapsto (0,+\infty)$$
satisfying Kirchoff's law:\par

\begin{equation}\label{eq: Kirchoff}
  \displaystyle\sum_{\substack{e\in E(G)\\ e^-=v}}w(e) = \displaystyle\sum_{\substack{e\in E(G)\\ e^+=v}}{w(e)}+\sum_{\substack{\mathrm{if~}v=x_i\\ i<k+1}}{a_i}-\sum_{\mathrm{if~}v=x_{k+1}}{b_j},\qquad \forall v\in V(G)
\end{equation}

The branched optimal transport problem is to solve:

\begin{equation}
    \label{eq: book_branch}
    \displaystyle\min_{G}\qquad C(G)=\sum_{e\in E(G)} \omega(e)+d(e).
\end{equation}

If $G^*$ is the minimizer to \eqref{eq: book_branch}, we want to show that
$$C(G^*)=C(\kappa)\abs{\log\varepsilon(\kappa)}.$$
\end{problem}

\begin{remark}
  \Cref{prob: BOT_formulation} is nonstandard in terms of branched optimal transport. The cost function in the branched optimal transport is $\omega(e)^{\alpha} d(e)$ for $\alpha\in [0,1)$. Please refer to \cite{xia2015motivations, Colombo2022Stability} for existence, regularity and stability results.   
\end{remark}

\begin{remark}
    The metric structure to this problem seems to be restricted. While one may consider concentric circles as an example. Mass $a_i$ are distributed on each circle and $x_i$ is characterized by its radius. Transporting along the circle has zero cost while transporting between concentric circles has cost depending on the difference of radiuses and masses.
\end{remark}

\section{Combinatorics Formulation}\label{sec: Comb}

Note that the vertical edges in \Cref{fig: translateBTG} are just dummy, we may obtain a family tree directly from the branched transport graph, as shown in \Cref{fig: familytree}.

\begin{figure}[h]
\centering
\begin{tikzpicture}
\node at (0,0) {$a_6$};
\node at (-1,-1) {$a_4$};
\node at (1,-1) {$a_5$};
\node at (-2,-2) {$a_2$};
\node at (0,-2) {$a_3$};
\node at (-3,-3) {$a_1$};
\path [draw=black,postaction={on each segment={mid arrow=red}}]  (-0.8,-0.8)-- (-0.2,-0.2);
\path [draw=black,postaction={on each segment={mid arrow=red}}] (0.8, -0.8)--(0.2, -0.2) ;
\path [draw=black,postaction={on each segment={mid arrow=red}}]  (-1.8, -1.8)-- (-1.2,-1.2);
\path [draw=black,postaction={on each segment={mid arrow=red}}]  (-0.2,-1.8)-- (-0.8,-1.2);
\path [draw=black,postaction={on each segment={mid arrow=red}}] (-2.8,-2.8) -- (-2.2,-2.2) ;
\end{tikzpicture}
\caption{Family Tree in \Cref{eg: eg1}}
\label{fig: familytree}
\end{figure}

Given a finite index set $[N]\defeq\{1,2,\cdots,N\}\subseteq \mathbb{N}$, we say $P:[N]\mapsto [N]$ is a \textit{parent function} if it satisfies:
\begin{enumerate}[label=\roman*)]
   \item For any $1\leqslant i\leqslant N-1$, $P(i)>i$.
   \item $P(N)=N$. 
\end{enumerate}
    
Each parent function $P$ recursively induces a \textit{generation function}  $\dep{\cdot}:[N]\mapsto\mathbb{N}$ by
\begin{align}\label{eq: generation}
    \left\{
    \begin{aligned}
    &\dep{i} = \dep{P(i)}+1;\\
    &\dep{N} = 0.
    \end{aligned}
    \right.
\end{align}
We say $P(i)$ is the parent of $i$, $i$ is the child of $P(i)$, $N$ is the root of the set $[N]$, and $\dep{i}$ is the generation number of $i$.

We begin with an example to show that not all such rooted trees corresponds a sorting plan.
\begin{example}
Let $N=4$ and $\mathrm{root}(G)=a_4$. Define a parent function by $P(1)=3$, $P(2)=4$, $P(3)=4$ and $P(4)=4$. It is easy to check this is a rooted tree, however there is no corresponding branched transport graph or a sorting plan. Since there is no way to merge white books of $a_1$ and white books of $a_3$ by skipping white books of $a_2$. And there must be an intersection on the branched transport graph, no matter how far we arrange branching nodes vertically.
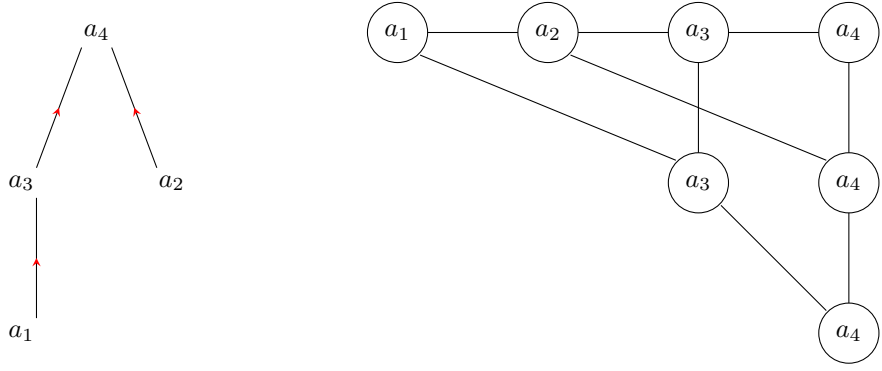
\begin{figure}[h]
\centering
\begin{tikzpicture}
\draw (0,0) circle (0.4);
\draw (2,0) circle (0.4);
\draw (4,0) circle (0.4);
\draw (6,0) circle (0.4);
\node at (0,0) {$a_{1}$};
\node at (2,0) {$a_{2}$};
\node at (4,0) {$a_{3}$};
\node at (6,0) {$a_{4}$};
\draw (0.4,0)--(1.6,0);
\draw (2.4,0)--(3.6,0);
\draw (4.4,0)--(5.6,0);

\draw (4,-2) circle (0.4);
\draw (4,-0.4)--(4,-1.6);
\node at (4,-2) {$a_{3}$};
\draw (0.3,-0.3)--(3.7,-1.7);

\draw (6,-2) circle (0.4);
\draw (6,-0.4)--(6,-1.6);
\node at (6,-2) {$a_{4}$};
\draw (2.3,-0.3)--(5.7,-1.7);

\draw (6,-4) circle (0.4);
\draw (6,-2.4)--(6,-3.6);
\node at (6,-4) {$a_{4}$};
\draw (4.3,-2.3)--(5.7,-3.7);

\node at (-4,0) {$a_{4}$};
\node at (-5,-2){$a_{3}$};
\node at (-3,-2) {$a_{2}$};
\node at (-5,-4) {$a_{1}$};
\path [draw=black,postaction={on each segment={mid arrow=red}}] 
(-4.8,-1.8)--(-4.2,-0.2);
\path [draw=black,postaction={on each segment={mid arrow=red}}] 
(-3.2,-1.8)--(-3.8,-0.2);
\path [draw=black,postaction={on each segment={mid arrow=red}}] 
(-4.8,-3.8)--(-4.8,-2.2);

\end{tikzpicture}
\caption{Left: A rooted tree. Right: Corresponding branched transport graph which is not feasible.}
\label{fig: intersect}
\end{figure}
\end{example}

\begin{lemma}\label{lem: P3}
Given a parent function $P$, for any pair $1\leqslant i<j\leqslant N$, if $P$ satisfying the following condition:
\begin{align}\label{eq: P3}
j<P(i)\Longrightarrow P(j)\leqslant P(i),
\end{align}
then the induced rooted tree $G$ induces a sorting plan. Conversely, any sorting plan induces a parent function satisfying \eqref{eq: P3}.

\begin{figure}[ht]
\centering
\begin{tikzpicture}[>=stealth]

\draw[->] (0,0)--(5.5,0);
\node at (5,-0.2) {$N$};
\draw (1,0)--(1,0.1);
\draw (4,0)--(4,0.1);
\draw (5,0)--(5,0.1);
\draw (1.7,0)--(1.7,0.1);
\node at (1,-0.2) {$i$};
\node at (4,-0.2) {$P(i)$};
\node at (1.7,-0.2) {$j$};
\draw[-latex,line width=0.4mm] (3,0.7)--(3,0.3);
\node at (3,1) {$P(j)$};
\draw [red, thick] (1.75,-0.2) to [round left paren] (1.75,0.2);
\draw [red, thick] (3.9,-0.2) to [square right brace] (3.9,0.2);
\end{tikzpicture}
\caption{Once the locations of $i,j,P(i)$ are fixed, the feasible interval for $P(j)$ is given by $(j,P(i)]$.}
\label{fig: P3}
\end{figure}
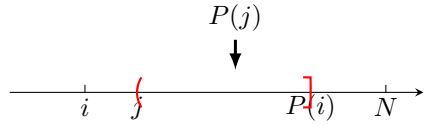
\end{lemma}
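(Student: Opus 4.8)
The plan is to recast the lemma as a scheduling question for a combinatorial ``merge game'' on the $N$ white groups, and I would begin by noting that condition \eqref{eq: P3} says precisely that the arcs $i\mapsto P(i)$ ($i<N$) form a non-crossing forest on $\{1,\dots,N\}$; the lemma asserts that this non-crossing property is exactly what realizability requires. The bridge I would set up is a labeling bookkeeping: initially the active white group at site $v$ carries label $v$; an elementary operation of the discrete model pushes some active white group past the single black stack immediately to its right, merging it with its right neighbour, and the merged group inherits the right neighbour's label. By axiom (i) and induction the active labels, read left to right, stay strictly increasing, and pushing the group labelled $v$ records exactly the tree-edge $v\to(\text{label of its right neighbour})$. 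Hence realizing the prescribed rooted tree $G$ as a sorting plan is equivalent to ordering the $N-1$ edges $v\to P(v)$ ($v<N$) so that, at the moment $v\to P(v)$ is used, the right neighbour of the group labelled $v$ is labelled exactly $P(v)$ --- equivalently, $P(v)$ is still active and no active label lies strictly between $v$ and $P(v)$. Getting this dictionary exactly right --- that every elementary operation is one such ``left group into right neighbour'' merge, that consecutive active white groups are always separated by exactly one black stack, and that $v\to P(v)$ must be used while the target still carries root-label $P(v)$ --- is the one genuinely delicate point; after that the two implications are short.

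For the ``only if'' direction, suppose \eqref{eq: P3} fails, so there are $i<j<P(i)<P(j)$; since $P(j)\le N$ we get $P(i)\le N-1$, so $P(i)$ is not the root and $P(P(i))$ exists. In any realizing order write $t(M)$ for the step at which merge $M$ is carried out, and put $M_1=(i\to P(i))$, $M_2=(P(i)\to P(P(i)))$, $M_3=(j\to P(j))$. At step $t(M_1)$ the label $P(i)$ is active (it is the right neighbour of $i$), so $M_2$ has not yet occurred: $t(M_1)<t(M_2)$. At step $t(M_3)$ the labels $j$ and $P(j)$ are adjacent, so $P(i)$, lying strictly between them, is no longer active; since a label vanishes exactly at the merge that pushes its group upward, $t(M_2)<t(M_3)$. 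Symmetrically, at step $t(M_1)$ the labels $i$ and $P(i)$ are adjacent, so $j$ (strictly between) is already gone, giving $t(M_3)<t(M_1)$. These three inequalities are cyclic, a contradiction; hence \eqref{eq: P3} holds.

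For the ``if'' direction I would induct on $N$, the case $N=1$ being vacuous. The engine is a sub-claim: under \eqref{eq: P3}, the largest leaf $\ell$ of $G$ (necessarily $\ell\le N-1$, since the root has child $N-1$) satisfies $P(\ell)=\ell+1$. Indeed, if $P(\ell)\ge\ell+2$ then $\ell+1\le N-1$ and $\ell+1$ is not a leaf (it exceeds the largest leaf), so it has a child $c<\ell+1$; since $c=\ell$ would give $P(\ell)=\ell+1$, in fact $c\le\ell-1$, and applying \eqref{eq: P3} to the pair $c<\ell$ with $\ell<P(c)=\ell+1$ forces $P(\ell)\le\ell+1$, a contradiction. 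Given such an $\ell$, I would schedule $\ell\to\ell+1$ as the first merge --- legal, since $\ell$ and $\ell+1$ are adjacent at the start and, $\ell$ being a leaf, nothing had to be attached to $\ell$ first --- then delete $\ell$ and relabel $\{1,\dots,N\}\setminus\{\ell\}$ monotonically onto $\{1,\dots,N-1\}$. The induced $P''$ on $\{1,\dots,N-1\}$ is again a parent function satisfying \eqref{eq: P3} (both axioms and the non-crossing condition are invariant under an order-preserving relabeling), and the configuration after this first merge is exactly the initial configuration of the $P''$-game; so the inductive schedule for $P''$, prefixed by $\ell\to\ell+1$, realizes $G$.

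The step I expect to be the main obstacle is the dictionary in the first paragraph: once elementary operations are matched bijectively with legal moves of the merge game --- the label convention, the single-black-stack invariant, and the requirement that a merge be performed while its target still carries the correct root-label --- both implications reduce to the short arguments above. A secondary, purely routine point is checking that the monotone relabeling used in the induction transports condition \eqref{eq: P3}.
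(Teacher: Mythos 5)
Your proof is correct, and it takes a genuinely different (and in places more rigorous) route than the paper's. For the forward direction (a parent function satisfying \eqref{eq: P3} is realizable), the paper argues by divide-and-conquer on the children of the root: it sets $J=\{i:P(i)=N\}$, uses \eqref{eq: P3} to show the blocks between consecutive elements of $J$ are sub rooted trees, and recurses on each block; you instead peel off one vertex at a time, via the structural observation that under \eqref{eq: P3} the largest leaf $\ell$ must satisfy $P(\ell)=\ell+1$, so its merge is an adjacent transposition that can legally be performed first, after which an order-preserving relabeling reduces to $N-1$ vertices. Both inductions are sound, but yours buys an explicit linear schedule of the $N-1$ merges and makes the legality of every step purely local, whereas the paper's recursion leaves implicit the order in which the roots of the sub-trees are finally merged into $N$ (doing $a_{N-1}\to a_N$ literally first, as the paper suggests, would destroy the label $N-1$ before its own children are attached, so the paper's sketch needs exactly the kind of care you supply). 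For the converse, the paper offers only the one-line assertion that a crossing forces the branched transport graph to self-intersect; your cyclic-timing contradiction $t(M_1)<t(M_2)<t(M_3)<t(M_1)$, derived from the three adjacency constraints, is an actual proof of that assertion and is the most valuable part of your write-up. The one ingredient you rightly flag as needing to be pinned down is the dictionary itself---that in the discrete model every elementary operation moves a whole white group past the single black stack on its right, the merged group inherits the right neighbour's label, and the active labels therefore stay increasing left to right---and this matches the convention the paper adopts in its series representation, so with that invariant stated your two implications close completely.
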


\begin{proof}
Given a sorting plan, $\eqref{eq: P3}$ is satisfied naturally. Otherwise, the branched transport graph intersects, which voids the sorting plan.

Now, suppose that a parent function $P$ satisfies \eqref{eq: P3}. We note an observation that if $P(i)-i=1$, then the elementary transposition is given by switching $a_{i}$ with $b_{i}$ and merging $a_{i}$ into $a_{i+1}$. Thus, the first transposition can be always given by switching $a_{N-1}$ with $b_{N-1}$.

If $a_{N-1}$ is the only vertex whose parent is $a_{N}$, then we can consider a sub rooted tree rooted at $a_{N-1}$. Otherwise, suppose there is a sub index set $J=\left\{i\in [N]: P(i)=N\right\}$. The index set J
induces a partition of $[N]$. Let $j_1<j_2<\cdots<j_m$ denote all elements in $J$, then by \eqref{eq: P3}, for any $1\leqslant i\leqslant m-1$, $\left\{a_{j_{i}+1},a_{j_{i}+2},\cdots,a_{j_{i+1}}\right\}$ is a sub rooted tree with the root $a_{j_{i+1}}$. And $\left\{a_{1},\cdot,a_{j_1}\right\}$ is a sub rooted tree with the root $a_{j_1}$. 

By this divide-and-conquer techniques, we recursively define a sorting plan.
\end{proof}

The next question is how to compute the transport cost from the rooted tree. Recall the cost is given in \eqref{eq: book_branch}. We rewrite \eqref{eq: book_branch} by noting that an edge $e$ is given $e^-=a_{i}$ and $e^+=a_{j}$, the length of which is given by $\sum_{k=i}^{j-1}b_k$.
\begin{equation}\label{eq: re-arrange_P}
\begin{aligned}
    \mathrm{Cost}(P)&=\sum_{e\in E}\omega(e)+d(e)\\
    &=\sum_{a_{}\in V}\omega(a_{})\dep{a_{}}+\sum_{e\in E}d(e)\\
    &=\sum_{i=1}^N a_i\dep{i}+\sum_{i=1}^N\sum_{e^-=a_{i}}d(e)\\
    &=\sum_{i=1}^{N-1} a_i\dep{i}+\sum_{i=1}^{N-1}\sum_{j=i}^{P(i)-1}b_j\\
    &=\sum_{i=1}^{N-1}\left[a_i\dep{i}+b_i\sum_{j=1}^i H(P(j)-i)\right],
\end{aligned}
\end{equation}
where $H(x)$ is the Heaviside function, given by $H(x)=1$ for $x>0$ and $H(x)=0$ for $x\leqslant 0$. Recall the generation function $\dep{\cdot}$ is induced by $P$ via \eqref{eq: generation}.

\begin{problem}[A rooted tree formulation]\label{prop: bressan1d_rooted_tree}
Given a finite sequence of atomic measures $\left\{a_{i}\right\}_{i=1}^N$ supported on a metric space $(\mathbb{R}^d,d_V)$. We consider the minimization problem among all admissible rooted trees of $\{a_{i}\}$ with the root $a_{N}$, that is,
\begin{align}\label{problem: min_rooted_tree}
    \mathrm{Minimize}\qquad \mathrm{Cost}(P)=\sum_{i=1}^{N-1}\left[a_i \dep{i}+b_i\sum_{j=1}^i H(P(j)-i)\right],
\end{align}
among $P\in \Pf{N}$, where $\Pf{N}$ denote the collection of parent functions defined on $[N]$ satisfying \eqref{eq: P3}.
\end{problem}

First $\Pf{N}$ is not convex even if one relaxes to $P:\mathbb{R}\mapsto \mathbb{R}$.

Second, in order to estimate the set of $\Pf{N}$, we need the following lemma, which is in fact a rephase of \Cref{lem: P3}.

\begin{lemma}\label{lem: P3_ver2}
Given $P\in\Pf{N}$, let $G$ be the induced rooted tree. If $i$ and $j$ are consecutive children of the same parent, that is, $P(i)=P(j)$ and for all $i<k<j$, $P(k)\neq P(i)$. Then let $J=\{i+1,\cdots,j\}$ and define $Q(k)=P(k)$ for $k=i+1,\cdots,j-1$ and $Q(j)=j$, then $Q\in \Pf{j-i}$ and $J$ is a rooted tree with the root $j$ and the orientation induced by $Q$.
\end{lemma}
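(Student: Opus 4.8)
The plan is to verify directly that the restricted map $Q$ lies in $\Pf{j-i}$ and that the corresponding sub-digraph is a rooted tree, then identify this sub-digraph with the one living on the block $\{i+1,\dots,j\}$. First I would renormalize indices: the set $J=\{i+1,\dots,j\}$ has $j-i$ elements, so relabel $k\mapsto k-i$ to identify $J$ with $[j-i]$; under this shift, $Q$ becomes a map $[j-i]\to[j-i]$ and the condition $Q(j)=j$ becomes "the largest index is fixed," i.e., property (ii) of a parent function. For property (i), I need $Q(k)>k$ for all $k\in\{i+1,\dots,j-1\}$, which is immediate since $Q(k)=P(k)>k$ by property (i) of $P$; but I also must check that $Q(k)$ actually stays inside $J$, i.e., that $P(k)\leqslant j$ for every $i<k<j$. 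This is where the hypothesis that $i$ and $j$ are \emph{consecutive} children of the same parent enters: if some $i<k<j$ had $P(k)>j$, then in particular $j<P(k)$, so by condition \eqref{eq: P3} applied to the pair $k<j$ we would get $P(j)\leqslant P(k)$; but $P(j)=P(i)$ and I can instead apply \eqref{eq: P3} to the pair $k<j$... — more cleanly, I apply \eqref{eq: P3} to the pair $i<k$: we have $i<k<j=\,$? No. Let me argue via $i<k$ and the common parent: since $P(i)=P(j)$ and $k<j$, if $P(k)>P(i)=P(j)$ then $P(j)<P(k)$ would need... The clean route is: apply \eqref{eq: P3} to the pair $(i,k)$ with $i<k$; since $k<j\leqslant P(i)$ (as $P(i)=P(j)>j$), we have $k<P(i)$, hence $k$ is not "beyond" $P(i)$ and \eqref{eq: P3} gives no obstruction there; instead apply it to $(k,j)$: if $P(k)>j$ then $j<P(k)$ forces $P(j)\leqslant P(k)$, consistent, so I must instead use that $k<P(i)$ would, were $P(k)>P(i)$, contradict consecutiveness — because then $k$ and some descendant chain...

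The cleanest correct argument, which I would write out, is this: suppose for contradiction that some $k$ with $i<k<j$ has $P(k)>j$. Since $P(i)=P(j)$ and (by property (i)) $P(j)>j$, we have $j<P(i)$. Now consider the pair $k<j$. Hmm, that gives $P(j)\leqslant P(k)$ only if $j<P(k)$, which holds — fine but unhelpful. Consider instead the pair $i<k$: we have $j<P(i)$ is known; is $k<P(i)$? Yes, since $k<j<P(i)$. So \eqref{eq: P3} applied to $(i,k)$ with hypothesis "$k<P(i)$" — but \eqref{eq: P3} has hypothesis $j<P(i)$ with the \emph{second} index playing the role of $j$; here the second index is $k$ and indeed $k<P(i)$ is the \emph{negation} of the hypothesis, so \eqref{eq: P3} says nothing. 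The working contradiction instead comes from \eqref{eq: P3} applied to $(k, P(j))$-type reasoning on the chain of ancestors of $k$: walking up from $k$ via $P$, generations strictly increase indices, and the first ancestor of $k$ that is $\geqslant j$ must in fact equal $P(i)=P(j)$ by \eqref{eq: P3} (each step, once the index exceeds $j$, it is pinned $\leqslant P(i)$), which would make $k$ a descendant of $P(i)$ sitting strictly between the children $i$ and $j$ — forcing $k$ or one of its ancestors to be a child of $P(i)$ strictly between $i$ and $j$, contradicting that $i,j$ are consecutive children. I will formalize this "first ancestor exceeding $j$" induction; it is a short monotonicity argument using \eqref{eq: P3} repeatedly.

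Once property (i) and the containment $Q(k)\in J$ are established, I check that $Q$ inherits \eqref{eq: P3} on $[j-i]$: for indices $i<p<q<j$ with $q<Q(p)=P(p)$, the original condition gives $P(q)\leqslant P(p)$, and since $Q(q)=P(q)$ when $q<j$ and $Q(j)=j\leqslant P(p)$ trivially, we get $Q(q)\leqslant Q(p)$; hence $Q\in\Pf{j-i}$. Finally, the induced digraph on $J$ is connected and acyclic with unique sink $j$ because $Q$ is a parent function (each vertex has out-degree one toward a strictly larger index, root excepted), so it is a rooted tree; and its edge set is literally the subset of $E(G)$ with both endpoints in $J$ together with the (dummy) self-edge at $j$, because no edge of $G$ leaves $J$ — that is exactly the content of the containment $P(k)\leqslant j$ proved above.

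I expect the main obstacle to be the verification that $P(k)\leqslant j$ for $i<k<j$, i.e., that the block $J$ is "closed under $P$." Everything else is bookkeeping, but that containment is precisely the geometric fact that consecutive children of $P(i)$ bracket a contiguous sub-interval on which the tree restricts — it is the combinatorial shadow of "no crossing edges in the branched transport graph," and it must be extracted carefully from \eqref{eq: P3} via the ancestor-chain monotonicity described above. With that in hand, the lemma is an immediate consequence of \Cref{lem: P3} applied to the parent function $Q$ on $[j-i]$.
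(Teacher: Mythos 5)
You correctly identify the crux: showing that the block $J=\{i+1,\dots,j\}$ is closed under $P$, i.e.\ $P(k)\leqslant j$ for every $i<k<j$; once that containment is in hand, the relabelling, the verification of the two parent-function axioms and of \eqref{eq: P3} for $Q$, and the tree structure are indeed routine. (The paper itself offers no proof of this lemma, calling it a ``rephase'' of \Cref{lem: P3}, so there is no written argument to compare against.) However, your argument for the closure step does not go through as written. First, you wrongly discard the application of \eqref{eq: P3} to the pair $(i,k)$: since $k<j<P(j)=P(i)$, the hypothesis ``second index $<P(\text{first index})$'' \emph{is} satisfied (it is not its negation), and \eqref{eq: P3} yields $P(k)\leqslant P(i)$. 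Second, the ancestor-chain argument you substitute does not deliver the claimed contradiction: following $k$ upward, the first ancestor $k_{s-1}$ whose parent equals $P(i)$ may, when $s>1$, lie in the interval $(j,P(i))$; a child of $P(i)$ lying \emph{above} $j$ is perfectly consistent with $i$ and $j$ being consecutive children, so no contradiction results.

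The fix is simply to combine the two applications of \eqref{eq: P3} that you considered separately and discarded. Suppose $i<k<j$ and $P(k)>j$. From the pair $(i,k)$ (admissible since $k<j<P(i)$) you get $P(k)\leqslant P(i)$. From the pair $(k,j)$ (admissible since $j<P(k)$ by assumption) you get $P(j)\leqslant P(k)$. Since $P(j)=P(i)$, the two inequalities force $P(k)=P(i)$ with $i<k<j$, contradicting the hypothesis that $i$ and $j$ are consecutive children of their common parent. With this two-line argument in place of your closure step, the remainder of your proposal is correct.
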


For any $P\in\Pf{N}$, it is clearly that it defines $\dep{\cdot}$ uniquely. Conversely, we have
\begin{lemma}\label{lem: NtoP}
For any $\dep{\cdot}:[N]\mapsto\mathbb{N}$, it induces at most a $P\in\Pf{N}$.
\end{lemma}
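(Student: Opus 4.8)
The plan is to prove the stronger statement that a parent function is recoverable from its generation function by an explicit formula, from which injectivity is immediate. Concretely, I claim that if $P\in\Pf{N}$ induces $\dep{\cdot}$ via \eqref{eq: generation}, then necessarily $P(N)=N$ and
\[
P(i)=\min\bigl\{\,j : i<j\leqslant N,\ \dep{j}=\dep{i}-1\,\bigr\}\qquad\text{for every }1\leqslant i\leqslant N-1 .
\]
Granting this recovery formula, \Cref{lem: NtoP} follows at once: if $P,Q\in\Pf{N}$ both induce the same $\dep{\cdot}$, then $P(i)=Q(i)$ for all $i<N$ by the formula and $P(N)=N=Q(N)$ by definition of a parent function, so $P=Q$; hence each generation function is realized by at most one element of $\Pf{N}$.

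To establish the formula, fix $i<N$ and set $j^{*}=P(i)$. Since $\dep{j^{*}}=\dep{i}-1$ and $j^{*}>i$, the set on the right-hand side contains $j^{*}$, so its minimum is $\leqslant j^{*}$; the content of the claim is to exclude any strictly smaller candidate, i.e.\ any $k$ with $i<k<j^{*}$ and $\dep{k}=\dep{j^{*}}$. Suppose such a $k$ exists. If $\dep{j^{*}}=0$ then $k$ would have generation $0$; but \eqref{eq: generation} forces $\dep{v}\geqslant 1$ for every $v<N$, so $k=N$, contradicting $k<j^{*}\leqslant N$. Assume therefore $\dep{j^{*}}\geqslant 1$, and follow the ancestor chain of $k$: put $k_{0}=k$ and $k_{t+1}=P(k_{t})$. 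This chain is strictly increasing, its generations decrease by exactly $1$ at each step, and it reaches $N$ after precisely $\dep{k}=\dep{j^{*}}$ steps. I would then show by induction on $t$ that $k_{t}<j^{*}$ for all $0\leqslant t\leqslant\dep{j^{*}}$: the base case is the hypothesis $k_{0}=k<j^{*}$, and for the inductive step one has $i<k_{t}<j^{*}=P(i)$, so condition \eqref{eq: P3} applied to the pair $i<k_{t}$ yields $k_{t+1}=P(k_{t})\leqslant P(i)=j^{*}$, while $k_{t+1}\neq j^{*}$ because $\dep{k_{t+1}}=\dep{j^{*}}-(t+1)<\dep{j^{*}}$, hence $k_{t+1}<j^{*}$. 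Taking $t=\dep{j^{*}}$ gives $N=k_{\dep{j^{*}}}<j^{*}\leqslant N$, a contradiction, which proves the recovery formula and therefore the lemma.

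The step I expect to require the most care is the inductive propagation in the last paragraph: one must verify that \eqref{eq: P3} is actually applicable at each stage, which holds precisely because the chain stays strictly between $i$ and $P(i)$, and that the generation function strictly decreases along the chain so that it is finite and terminates at the root $N$ — both facts coming directly from \eqref{eq: generation}. As an independent cross-check I would also run an induction on $N$ using the block decomposition already set up in the proofs of \Cref{lem: P3} and \Cref{lem: P3_ver2}: the indices of generation $1$ are exactly the children of the root $N$, so they are determined by $\dep{\cdot}$ and they fix the partition of $[N]$ into sub-rooted-trees; restricting $\dep{\cdot}$ to each block (and subtracting the generation of that block's root) gives the generation function of a smaller parent function in the corresponding $\Pf{\,\cdot\,}$, which is unique by the inductive hypothesis, so $P$ is uniquely reconstructed block by block.
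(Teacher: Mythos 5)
Your proof is correct, and it arrives at the same underlying characterization as the paper --- the parent of $i$ is the nearest index to the right whose generation number is $\dep{i}-1$ --- but by a genuinely different route. The paper reconstructs $P$ top-down, layer by layer: the generation-$1$ indices are declared children of the root, \Cref{lem: P3_ver2} is invoked to place each generation-$2$ index under the next generation-$1$ index to its right, and the process recurses; the final uniqueness claim is asserted rather than argued in detail. You instead write down the closed-form recovery formula $P(i)=\min\{\,j>i:\dep{j}=\dep{i}-1\,\}$ and verify it directly, by following the ancestor chain of a hypothetical closer candidate $k$ and propagating the bound $k_t<P(i)$ via \eqref{eq: P3} until the root lands strictly below $P(i)$, a contradiction. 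Your version is self-contained (it does not lean on the unproved \Cref{lem: P3_ver2}), it makes the injectivity of $P\mapsto\dep{\cdot}$ completely explicit, and the delicate points are handled: you check that $i<k_t<P(i)$ at every stage so that \eqref{eq: P3} genuinely applies, and that $\dep{\cdot}$ vanishes only at $N$ so the chain terminates where claimed. What the paper's recursive formulation buys instead is that it doubles as the algorithm one would actually run to rebuild the tree and it meshes with the divide-and-conquer decomposition used in \Cref{lem: P3} and in the proof of \Cref{thm: P_N}; your closing cross-check by block decomposition is essentially that proof.
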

\begin{proof}
We may assume $\dep{N}=0$. For all $i$ such that $\dep{i}=1$, $i$ is a child of $N$, that is, $P(i)=N$. We take this index set as $I$.

Now we let $\dep{k}=\dep{k}-1$ for all $k\in [N]$. For all $j$ such that $\dep{j}=1$, by \Cref{lem: P3_ver2}, we know if $j$ is less than all indexes in $I$, then $P(j)=i$; if $j$ is between two consecutive $i_{k}$ and $i_{k+1}$ in $I$, then $P(j)=i_{k+1}$; otherwise, $\dep{\cdot}$ cannot induce an admissible parent function.

We recursively define the $P$ until $\dep{i}\leqslant 0$ for all $i$. By our construction, it is unique.
\end{proof}

Finally, we reach our main theorem in the combinatoric approach.

\begin{theorem}[Total number of the set $\Pf{N}$]\label{thm: P_N}\footnote{Personal Communication with Dr. Dun Qiu.}
$\#(\Pf{N})=a_N=C_{N-1}\defeq\frac{1}{N}\binom{2N-2}{N-1}$ given by the $(N-1)$-th Catalan number $C_{N-1}$.
\end{theorem}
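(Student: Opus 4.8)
The plan is to set up a bijection between $\Pf{N}$ and a standard family counted by the $(N-1)$-th Catalan number. The most convenient target is the set of binary trees on $N-1$ internal nodes, or equivalently, well-formed sequences of $N-1$ pairs of parentheses; but given the structure already exposed in \Cref{lem: P3_ver2}, I would instead aim directly at the recursive decomposition that Catalan numbers satisfy, namely $a_N = \sum_{\ell=1}^{N-1} a_\ell\, a_{N-\ell}$ with $a_1 = 1$. Concretely, I would let $p_N \defeq \#(\Pf{N})$ and prove $p_1 = 1$ and the convolution recursion $p_N = \sum_{\ell=1}^{N-1} p_\ell\, p_{N-\ell}$; since the Catalan numbers are the unique sequence satisfying this recursion with that initial value (and the shift $a_N = C_{N-1}$ is just bookkeeping), the theorem follows.

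The key steps, in order, are as follows. First, I would record the base cases: $\Pf{1}$ has the single function $P(1)=1$, and one checks $\Pf{2}$ has exactly one element as well, consistent with $C_0 = C_1 = 1$. Second, and this is the heart of the argument, I would decompose an arbitrary $P\in\Pf{N}$ according to the position of its ``last'' child of the root. Let $j_1 < j_2 < \cdots < j_m$ be the children of $N$ (so $j_m = N-1$ always, by the observation in the proof of \Cref{lem: P3}, since condition \eqref{eq: P3} forces $N-1$'s parent to be $N$). By \Cref{lem: P3_ver2}, the blocks $\{1,\dots,j_1\}$, $\{j_1{+}1,\dots,j_2\}$, \dots, $\{j_{m-1}{+}1,\dots,j_m\}$ are each, after restriction and re-rooting, elements of $\Pf{\cdot}$ on the appropriate block sizes, and conversely any choice of a composition $N = \ell_1 + \cdots + \ell_m$ together with one parent function in $\Pf{\ell_t}$ per block reassembles uniquely to a $P\in\Pf{N}$ (one re-indexes each block, sets each block-root's parent to $N$, leaving everything else intact; condition \eqref{eq: P3} is exactly what guarantees no cross-block edges and hence that this gluing is well-defined and invertible). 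This gives $p_N = \sum_{m\ge 1}\ \sum_{\ell_1+\cdots+\ell_m = N}\ \prod_{t=1}^m p_{\ell_t}$. Third, I would peel off the first block: grouping by $\ell \defeq \ell_1$, the remaining blocks form an arbitrary such decomposition of $N - \ell$ (including the empty one when $\ell = N$, which should be read as contributing the $m=1$ term), so $p_N = \sum_{\ell=1}^{N} p_\ell\, \big[\![\ell = N]\!\big] + \sum_{\ell=1}^{N-1} p_\ell\, q_{N-\ell}$ where $q_k$ denotes the full sum $\sum_m \sum_{\ell_1+\cdots+\ell_m=k}\prod p_{\ell_t}$; but $q_k = p_k$ by the very decomposition just established, so one lands on $p_N = \sum_{\ell=1}^{N-1} p_\ell\, p_{N-\ell}$ for $N \ge 2$, with the $m=1$ term $p_N$'s "own" contribution handled by the $\ell = N-1$ plus $\ell=1$ boundary terms once $p_1=1$. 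Finally, I would note that $(C_{N-1})_{N\ge 1} = (1,1,2,5,14,\dots)$ satisfies precisely $C_{N-1} = \sum_{\ell=1}^{N-1} C_{\ell-1} C_{N-\ell-1}$ with $C_0 = 1$, so by induction $p_N = C_{N-1}$.

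I expect the main obstacle to be making the block decomposition genuinely bijective rather than merely an inequality in both directions — in particular, verifying that the re-rooting map $P \mapsto (Q_1,\dots,Q_m)$ of \Cref{lem: P3_ver2} is inverted by the gluing map, i.e. that setting each $j_t$ to have parent $N$ and keeping all intra-block parent values recovers a function still satisfying \eqref{eq: P3} and no other, and that distinct $(\,m;\ell_1,\dots,\ell_m;Q_1,\dots,Q_m\,)$ give distinct $P$. The partial order/non-intersection content of \Cref{lem: P3} and \Cref{lem: P3_ver2} is exactly what is needed here, so the work is in carefully chaining those two lemmas; the generating-function bookkeeping at the end ($P(x) = 1 + x P(x)^2$ for $P(x) = \sum_{N\ge 1} p_N x^{N-1}$, whose solution is the Catalan generating function) is then routine. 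A cleaner alternative, which I would mention, is to exhibit the bijection with Dyck paths directly: read the indices $1,2,\dots,N-1$ left to right, emitting an up-step when opening the ``span'' $[i, P(i))$ and a down-step when closing it; condition \eqref{eq: P3} is precisely the nesting (non-crossing) condition that makes this a valid Dyck path of semilength $N-1$, and \Cref{lem: NtoP} supplies injectivity of the decoding.
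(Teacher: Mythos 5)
Your proposal is correct and lands on the same recursion as the paper---the Catalan convolution $p_N=\sum_{\ell=1}^{N-1}p_\ell\,p_{N-\ell}$ established by induction from the non-crossing block structure---but the decomposition you use to get there is organized differently. The paper conditions on the value $j=P(1)$: the right piece $\{j,\dots,N\}$ is a genuine sub-rooted-tree, while the left piece $\{1,\dots,j-1\}$ has its root \emph{outside} the block, and \Cref{lem: Q_nomatter_last} is introduced precisely to argue that such externally-rooted configurations are equinumerous with parent functions on a set of the same size; this yields $a_N=\sum_{j=2}^N a_{j-1}a_{N-j+1}$ directly. You instead condition on the full set of children $j_1<\dots<j_m$ of the root $N$, obtaining via \Cref{lem: P3_ver2} an ordered forest of trees on $\{1,\dots,N-1\}$, and then recover the two-term convolution by peeling off the first block; this bypasses \Cref{lem: Q_nomatter_last} entirely, at the price of having to verify that the gluing is bijective (which you correctly flag as the main work, and which is exactly the content of \Cref{lem: P3} and \Cref{lem: P3_ver2}). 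One bookkeeping slip to fix: since $j_m=N-1$, the blocks partition $\{1,\dots,N-1\}$, so your compositions are of $N-1$, not of $N$, and consequently $q_k=p_{k+1}$ rather than $q_k=p_k$; tracing this through still gives $p_N=q_{N-1}=\sum_{\ell=1}^{N-1}p_\ell\,q_{N-1-\ell}=\sum_{\ell=1}^{N-1}p_\ell\,p_{N-\ell}$, so the conclusion survives, but the intermediate identities as written are off by one. Your closing suggestion of a direct Dyck-path bijection, with \eqref{eq: P3} playing the role of the nesting condition and \Cref{lem: NtoP} supplying injectivity of the decoding, is arguably cleaner than either inductive argument and would be worth carrying out in full.
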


In order to prove \Cref{thm: P_N}, we need a technical lemma.

\begin{lemma}\label{lem: Q_nomatter_last}
Given a constant $K>N$ and a collection $\mathcal{Q}$ of functions $Q:[N]\mapsto [N]\cup \{K\}$ satisfying
\begin{enumerate}[label=\alph*)]
    \item $Q(i)>i$ for all $i\in [N]$;
    \item $Q(i)\leqslant N$ for all $1\leqslant i\leqslant N-1$;
    \item $Q(N)=K$;
    \item For any pair $1\leqslant i < j\leqslant N$, if $j<Q(i)$, then $Q(j)\leqslant Q(i)$.
\end{enumerate}
Then $\#(\mathcal{Q})=\#(\Pf{N})$.
\end{lemma}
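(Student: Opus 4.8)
The plan is to produce an explicit bijection $\Phi\colon\mathcal{Q}\to\Pf{N}$ by ``re-rooting'': given $Q\in\mathcal{Q}$, define $\Phi(Q)$ to agree with $Q$ on $\{1,\dots,N-1\}$ and to satisfy $\Phi(Q)(N)=N$. The natural candidate inverse $\Psi\colon\Pf{N}\to\mathcal{Q}$ sends $P$ to the function that agrees with $P$ on $\{1,\dots,N-1\}$ and has value $K$ at $N$. Both $\Phi$ and $\Psi$ alter only the value at the index $N$, and on each side that value is forced ($N$ for a parent function, $K$ by condition (c)), so $\Phi\circ\Psi$ and $\Psi\circ\Phi$ are manifestly the identity. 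Hence the whole content is to check that $\Phi$ lands in $\Pf{N}$ and $\Psi$ lands in $\mathcal{Q}$.

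For $\Phi(Q)\in\Pf{N}$: the requirement $\Phi(Q)(i)>i$ for $i\le N-1$ is condition (a), and $\Phi(Q)(N)=N$ holds by construction. To verify \eqref{eq: P3}, take $1\le i<j\le N$ with $j<\Phi(Q)(i)$. If $j\le N-1$, then $\Phi(Q)$ coincides with $Q$ at both $i$ and $j$, and condition (d) gives $Q(j)\le Q(i)$, i.e. $\Phi(Q)(j)\le\Phi(Q)(i)$. If $j=N$, then $i\le N-1$, so by (b) $\Phi(Q)(i)=Q(i)\le N=j$, and the hypothesis $j<\Phi(Q)(i)$ cannot occur — the implication is vacuous.

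For $\Psi(P)\in\mathcal{Q}$: conditions (a), (b), (c) are immediate, since $\Psi(P)(i)=P(i)$ satisfies $i<P(i)\le N$ for $i\le N-1$ while $\Psi(P)(N)=K>N$. For condition (d), take $1\le i<j\le N$ with $j<\Psi(P)(i)$. If $j\le N-1$, then $\Psi(P)$ coincides with $P$ at $i$ and $j$ and \eqref{eq: P3} applies verbatim. If $j=N$, then $i\le N-1$ forces $\Psi(P)(i)=P(i)\le N=j$, so again the hypothesis is impossible.

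The reason the statement holds — and essentially the only thing one must notice — is that every member of either family is determined by its restriction to $[N-1]$: the value at $N$ is prescribed on each side, and the ordered pair $(i,N)$ contributes a vacuous constraint in both \eqref{eq: P3} and (d) precisely because $P(i),Q(i)\le N$ whenever $i<N$. Thus both $\Pf{N}$ and $\mathcal{Q}$ are in canonical bijection with the common set of truncated data $R\colon[N-1]\to\{2,\dots,N\}$ with $R(i)>i$ obeying the $j\le N-1$ instance of \eqref{eq: P3}, and the lemma follows. I do not expect a genuine obstacle here; the one point requiring care is isolating the endpoint index $j=N$ in the two order conditions, which is exactly the step carried out above and which is the whole purpose of allowing the dummy value $K$.
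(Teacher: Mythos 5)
Your proof is correct and follows essentially the same route as the paper's: both construct the bijection that alters only the value at index $N$ (swapping $N$ and $K$) and verify conditions a)--d) and \eqref{eq: P3}, with the key observation that the pair $(i,N)$ yields a vacuous constraint because $Q(i)\leqslant N$ for $i<N$. Your write-up is slightly more explicit than the paper's in noting that the two maps are mutually inverse, but the substance is identical.
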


\begin{proof}
Given a $P\in\Pf{N}$, we define $Q$ by
\begin{align*}
    Q(i)=\left\{
    \begin{aligned}
    &P(i);\qquad& 1\leqslant i\leqslant N-1;\\
    &K;\qquad& i=N.
    \end{aligned}
    \right.
\end{align*}
By the definition of parent functions, \textit{a)} is satisfied. Since the range of $P$ is $[N]$, \textit{b)} is satisfied. \textit{c)} is trivial and \textit{d)} holds as well, because $Q(i)\leqslant N$ which forces $j<N$, for $1\leqslant i<j<N$, by \eqref{eq: P3} we obtain the result.

On the other hand, given a $Q\in\mathcal{Q}$, we define $P$ by
\begin{align*}
    P(i)=\left\{
    \begin{aligned}
    &Q(i);\qquad& 1\leqslant i\leqslant N-1;\\
    &N;\qquad& i=N.
    \end{aligned}
    \right.
\end{align*}

By \textit{b)} and $P(N)=N$, conditions in the definition of parents functions are satisfied. For $1\leqslant i<j<N$, \eqref{eq: P3} holds due to \textit{d)}. For $1\leqslant i<N$ and $j=N$, if $Q(i)>j=N$, then $i>N$, contradiction! Therefore, \eqref{eq: P3} always holds.
\end{proof}

\begin{proof}[Proof of \Cref{thm: P_N}]
It is trivial to check the formula holds for $k=1,2$. Now we assume the formula holds for $k\leqslant N-1$ and we check the case for $k=N$.

Since $P(1)$ has $N-1$ possible vlaues $\{2,3,\cdots,N\}$. Suppose $P(1)=j$, then by \Cref{lem: P3}, $\{j,j+1,\cdots,N\}$ forms a sub rooted tree. By the assumption of mathematical induction, there are $a_{N-(j-1)}$ such parent functions satisfying \eqref{eq: P3} that maps the subset $\{j,j+1,\cdots,j\}$ to $\{j,j+1,\cdots,j\}$. By \Cref{lem: Q_nomatter_last}, there are $a_{j-1}$ such parent functions satisfying \eqref{eq: P3} that maps the subset $\{2,3,\cdots,j-1\}$ to $\{2,3,\cdots,j\}$. As a result, we have
\begin{align}\label{eq: catalan}
    a_N = \sum_{j=2}^N a_{j-1} a_{N-j+1}=\sum_{j=2}^N C_{j-2}C_{N-j}=\sum_{i=0}^{N-2}C_i C_{N-i}=C_{N-1}.
\end{align}
\end{proof}



%
%
\printbibliography

@article {brenier1989least,
    AUTHOR = {Brenier, Yann},
     TITLE = {The least action principle and the related concept of
              generalized flows for incompressible perfect fluids},
   JOURNAL = {J. Amer. Math. Soc.},
  FJOURNAL = {Journal of the American Mathematical Society},
    VOLUME = {2},
      YEAR = {1989},
    NUMBER = {2},
     PAGES = {225--255},
      ISSN = {0894-0347},
   MRCLASS = {58D05 (35Q10 49H05 58E30 58F11 76A02 76C05)},
  MRNUMBER = {969419},
MRREVIEWER = {Alberto Valli},
       DOI = {10.2307/1990977},
       URL = {https://doi.org/10.2307/1990977},
}

@article {Benamou2000Computational,
    AUTHOR = {Benamou, Jean-David and Brenier, Yann},
     TITLE = {A computational fluid mechanics solution to the
              {M}onge-{K}antorovich mass transfer problem},
   JOURNAL = {Numer. Math.},
  FJOURNAL = {Numerische Mathematik},
    VOLUME = {84},
      YEAR = {2000},
    NUMBER = {3},
     PAGES = {375--393},
      ISSN = {0029-599X,0945-3245},
   MRCLASS = {65M60 (35Q35 76M30)},
  MRNUMBER = {1738163},
MRREVIEWER = {Enrique\ Fern\'{a}ndez Cara},
       DOI = {10.1007/s002110050002},
       URL = {https://doi.org/10.1007/s002110050002},
}

@article {Ishwaran2001Gibbs,
    AUTHOR = {Ishwaran, Hemant and James, Lancelot F.},
     TITLE = {Gibbs sampling methods for stick-breaking priors},
   JOURNAL = {J. Amer. Statist. Assoc.},
  FJOURNAL = {Journal of the American Statistical Association},
    VOLUME = {96},
      YEAR = {2001},
    NUMBER = {453},
     PAGES = {161--173},
      ISSN = {0162-1459,1537-274X},
   MRCLASS = {62C10 (60G57 62G05)},
  MRNUMBER = {1952729},
MRREVIEWER = {Bruno\ Betr\`o},
       DOI = {10.1198/016214501750332758},
       URL = {https://doi.org/10.1198/016214501750332758},
}

@article{bressan2003lemma,
    AUTHOR = {Bressan, Alberto},
     TITLE = {A lemma and a conjecture on the cost of rearrangements},
   JOURNAL = {Rend. Sem. Mat. Univ. Padova},
  FJOURNAL = {Rendiconti del Seminario Matematico della Universit\`a di
              Padova. The Mathematical Journal of the University of Padova},
    VOLUME = {110},
      YEAR = {2003},
     PAGES = {97--102},
      ISSN = {0041-8994},
   MRCLASS = {35L65 (05A05 35Q35 35R25 37C60)},
  MRNUMBER = {2033002},
MRREVIEWER = {Valeri G. Yakhno},
}

@article{xia2003optimal,
  title={Optimal paths related to transport problems},
  author={Xia, Qinglan},
  journal={Communications in Contemporary Mathematics},
  volume={5},
  number={02},
  pages={251--279},
  year={2003},
  publisher={World Scientific}
}

@book {Villani2003OT,
    AUTHOR = {Villani, C\'{e}dric},
     TITLE = {Topics in optimal transportation},
    SERIES = {Graduate Studies in Mathematics},
    VOLUME = {58},
 PUBLISHER = {American Mathematical Society, Providence, RI},
      YEAR = {2003},
     PAGES = {xvi+370},
      ISBN = {0-8218-3312-X},
   MRCLASS = {90-02 (28D05 35B65 35J60 49N90 49Q20 90B20)},
  MRNUMBER = {1964483},
       DOI = {10.1090/gsm/058},
       URL = {https://doi.org/10.1090/gsm/058},
}

@misc{bressan2006prize,
  title={Prize offered for the solution of a problem on mixing flows},
  author={Bressan, A},
  year={2006}
}

@incollection {Bianchini2006Bressan,
    AUTHOR = {Bianchini, Stefano},
     TITLE = {On {B}ressan's conjecture on mixing properties of vector
              fields},
 BOOKTITLE = {Self-similar solutions of nonlinear {PDE}},
    SERIES = {Banach Center Publ.},
    VOLUME = {74},
     PAGES = {13--31},
 PUBLISHER = {Polish Acad. Sci. Inst. Math., Warsaw},
      YEAR = {2006},
   MRCLASS = {35F25 (35L65 37A25 46E35)},
  MRNUMBER = {2295177},
MRREVIEWER = {J\"{o}rg\ M.\ H\"{a}rterich},
       DOI = {10.4064/bc74-0-1},
       URL = {https://doi.org/10.4064/bc74-0-1},
}

@article {Thiffeault2012review,
    AUTHOR = {Thiffeault, Jean-Luc},
     TITLE = {Using multiscale norms to quantify mixing and transport},
   JOURNAL = {Nonlinearity},
  FJOURNAL = {Nonlinearity},
    VOLUME = {25},
      YEAR = {2012},
    NUMBER = {2},
     PAGES = {R1--R44},
      ISSN = {0951-7715},
   MRCLASS = {37A25 (35A23 35B45 37-02 46E35 76F25)},
  MRNUMBER = {2876867},
MRREVIEWER = {Marko Nedeljkov},
       DOI = {10.1088/0951-7715/25/2/R1},
       URL = {https://doi.org/10.1088/0951-7715/25/2/R1},
}

@article{xia2015motivations,
  title={Motivations, ideas and applications of ramified optimal transportation},
  author={Xia, Qinglan},
  journal={ESAIM: Mathematical Modelling and Numerical Analysis},
  volume={49},
  number={6},
  pages={1791--1832},
  year={2015},
  publisher={EDP Sciences}
}

@article {Brenier2015Connections,
    AUTHOR = {Brenier, Yann},
     TITLE = {Connections between optimal transport, combinatorial
              optimization and hydrodynamics},
   JOURNAL = {ESAIM Math. Model. Numer. Anal.},
  FJOURNAL = {ESAIM. Mathematical Modelling and Numerical Analysis},
    VOLUME = {49},
      YEAR = {2015},
    NUMBER = {6},
     PAGES = {1593--1605},
      ISSN = {2822-7840,2804-7214},
   MRCLASS = {76B03 (35Q35 49Q20 90C27)},
  MRNUMBER = {3423266},
MRREVIEWER = {Beno\^{i}t\ Kloeckner},
       DOI = {10.1051/m2an/2015034},
       URL = {https://doi.org/10.1051/m2an/2015034},
}

@book {Goodfellow2016DL,
    AUTHOR = {Goodfellow, Ian and Bengio, Yoshua and Courville, Aaron},
     TITLE = {Deep learning},
    SERIES = {Adaptive Computation and Machine Learning},
 PUBLISHER = {MIT Press, Cambridge, MA},
      YEAR = {2016},
     PAGES = {xxii+775},
      ISBN = {978-0-262-03561-3},
   MRCLASS = {68-02 (62-02 62H25 62H99 68T05)},
  MRNUMBER = {3617773},
}

@article {Alberti2019Exponential,
    AUTHOR = {Alberti, Giovanni and Crippa, Gianluca and Mazzucato, Anna L.},
     TITLE = {Exponential self-similar mixing by incompressible flows},
   JOURNAL = {J. Amer. Math. Soc.},
  FJOURNAL = {Journal of the American Mathematical Society},
    VOLUME = {32},
      YEAR = {2019},
    NUMBER = {2},
     PAGES = {445--490},
      ISSN = {0894-0347,1088-6834},
   MRCLASS = {35Q35 (76F25)},
  MRNUMBER = {3904158},
MRREVIEWER = {Maurizio\ Brocchini},
       DOI = {10.1090/jams/913},
       URL = {https://doi.org/10.1090/jams/913},
}

@article {Colombo2022Stability,
    AUTHOR = {Colombo, Maria and De Rosa, Antonio and Marchese, Andrea and
              Pegon, Paul and Prouff, Antoine},
     TITLE = {Stability of optimal traffic plans in the irrigation problem},
   JOURNAL = {Discrete Contin. Dyn. Syst.},
  FJOURNAL = {Discrete and Continuous Dynamical Systems. Series A},
    VOLUME = {42},
      YEAR = {2022},
    NUMBER = {4},
     PAGES = {1647--1667},
      ISSN = {1078-0947},
   MRCLASS = {49Q22 (49K40)},
  MRNUMBER = {4385771},
       DOI = {10.3934/dcds.2021167},
       URL = {https://doi.org/10.3934/dcds.2021167},
}

@article{Li2023optimal,
  title={Optimal Ricci curvature Markov chain Monte Carlo methods on finite states},
  author={Li, Wuchen and Lu, Linyuan},
  journal={arXiv preprint arXiv:2302.01378},
  year={2023}
}
\end{document}